\documentclass[12pt]{amsart}

\setlength{\textheight}{23cm}
\setlength{\textwidth}{16cm}
\setlength{\topmargin}{-0.8cm}
\setlength{\parskip}{0.3\baselineskip}
\hoffset=-1.4cm

\usepackage{amssymb}

\newtheorem{theorem}{Theorem}[section]
\newtheorem{proposition}[theorem]{Proposition}
\newtheorem{lemma}[theorem]{Lemma}

\newtheorem{definition}[theorem]{Definition}
\newtheorem{remark}[theorem]{Remark}

\def\P{{\mathbb P}}
\def\C{{\mathbb C}}

\def\Z{{\mathbb Z}}

\def\h{\mathfrak{h}}

\numberwithin{equation}{section}

\begin{document}

\baselineskip=15pt

\title[Quantization of moduli of parabolic bundles
on ${\mathbb C}{\mathbb P}^1$]{Quantization of some moduli spaces
of parabolic vector bundles on ${\mathbb C}{\mathbb P}^1$}

\author[I. Biswas]{Indranil Biswas}

\address{School of Mathematics, Tata Institute of Fundamental
Research, Homi Bhabha Road, Bombay 400005, India}

\email{indranil@math.tifr.res.in}

\author[C. Florentino]{Carlos Florentino}

\address{Departament of Mathematics, Center for Mathematical Analysis, Geometry and Dynamical Systems, 
Instituto Superior
T\'ecnico, Av. Rovisco Pais, 1049-001 Lisbon, Portugal}

\email{cfloren@math.ist.utl.pt}

\author[J. Mour\~ao]{Jos\'e Mour\~ao}

\address{Departament of Mathematics, 
Center for Mathematical Analysis, Geometry and Dynamical Systems, Instituto Superior
T\'ecnico, Av. Rovisco Pais, 1049-001 Lisbon, Portugal}

\email{jmourao@math.ist.utl.pt}

\author[J. P. Nunes]{Jo\~ao P. Nunes}

\address{Departament of Mathematics, 
Center for Mathematical Analysis, Geometry and Dynamical Systems, Instituto Superior
T\'ecnico, Av. Rovisco Pais, 1049-001 Lisbon, Portugal}

\email{jpnunes@math.ist.utl.pt}

\subjclass[2000]{53D50, 14H60}

\keywords{Quantization, parabolic bundles, moduli space,
elliptic curve}

\date{}

\begin{abstract}
We address quantization of the natural symplectic structure on
a moduli space of parabolic vector bundles of
parabolic degree zero over ${\mathbb C}{\mathbb P}^1$ with four
parabolic points and parabolic weights in $\{0\, ,1/2\}$.
Identifying such parabolic bundles as vector bundles on an
elliptic curve, we obtain explicit expressions for the
corresponding non-abelian theta functions. 
These non-abelian theta functions are
described in terms of certain naturally defined distributions 
on the compact group $\mathrm{SU(2)}$.
\end{abstract}

\maketitle

\tableofcontents

\section{Introduction}\label{sec1}

Let $X$ be a compact connected
Riemann surface, or equivalently a smooth complex
projective curve. It is well known that the moduli spaces of vector
bundles over $X$ have a canonical symplectic structure \cite{G},
with integral symplectic form. Indeed, being naturally identified
with spaces of flat connections on a
compact oriented surface, these are important
classical phase spaces of Chern-Simons theory. The natural question
of their quantization was addressed in many articles
\cite{Hi}, \cite{AdPW}.

The geometric quantization of moduli spaces $\mathcal{N}$ of vector
bundles over $X$ in a so-called K\"{a}hler polarization leads to what
is known as spaces of non-abelian theta functions.
More concretely, the K\"{a}hler polarized Hilbert spaces, at level $k=1,2,
\cdots $, are the spaces $H^{0}(\mathcal{N},\, 
\mathcal{L}^{k})$, where $\mathcal{L}$ is a determinant line bundle,
endowed with a natural Chern connection, whose curvature coincides
with the symplectic form. A projectively flat connection was constructed
by Hitchin on the space of complex structures on $\mathcal{N}$ \cite{Hi}, 
providing
a way of identifying different choices of K\"{a}hler polarized Hilbert
spaces.

However, an explicit identification between K\"{a}hler polarized quantizations
and real polarized ones has only been found in a few examples, notably
the case when $X$ is an elliptic curve \cite{AdPW,FMN2}, using the
relationship between the moduli spaces in this case and a certain class
of abelian varieties. In turn, a comparison between real and K\"{a}hler
quantizations for abelian varieties was obtained using a coherent
state transform \cite{FMN2,FMN1,BMN}. 

In this article, we follow the analogous geometric quantization program
for the moduli space $\mathcal{M}_{P}(r)$ of parabolic bundles of
rank $r$ over $\mathbb{C}\mathbb{P}^{1}$ with four parabolic points
and parabolic weights in $\{0\, ,1/2\}$ with
parabolic degree zero \cite{MS,MY}.
It is known that, as in the case of vector bundles, there is a determinant
line bundle $\zeta_{P}$ over the moduli space of parabolic bundles
endowed with a natural Chern connection, whose curvature is the (generally
singular) K\"{a}hler form.

Let $X$ be the elliptic curve which has a degree two map to
$\mathbb{C}\mathbb{P}^{1}$ ramified over the parabolic points.
Using the description of the parabolic bundles of
above type as holomorphic vector bundles
over $X$ equipped with a lift of the involution corresponding
to the degree two covering \cite{Bi1}, we see that, for a given choice
of parabolic structures on these 4 points, $\mathcal{M}_{P}(r)$ has
dimension $d\leq r/2$ and we have a canonical isomorphism
\[
\mathcal{M}_{P}(r)\,\cong \,X^{d}/\Gamma_{d}\, \cong\,
\mathbb{C}\mathbb{P}^{d}\, ,
\]
where $\Gamma_{d}$ is the semi-direct product $(\mathbb{Z}/2\mathbb{Z})^{d}\rtimes \Sigma_d$ 
for the natural action on $(\mathbb{Z}/2\mathbb{Z})^{d}$ of the
symmetric group $\Sigma_d$ for $d$ elements. Moreover, for the natural
polarization line bundle $L$ on the abelian variety $X^{d}$ (associated to
a K\"{a}hler form of area one on $X$), we
obtain an isomorphism $\phi^{*}\zeta_{P}\,\cong\, L^{2}$,
where $\zeta_{P}\,\longrightarrow\,\mathcal{M}_{P}(r)$ 
is the determinant line bundle
and $\phi\,:\,X^{d}\,\longrightarrow\,\mathcal{M}_{P}(r)$ is the
natural quotient (see Sections 2 and 3).

This very concrete description allows the expression of the quantization
Hilbert space at level $k$, namely $H^{0}(\mathcal{M}_{P},\,
\zeta_{P}^{k})$, in terms of the (abelian)
theta functions of level $2k$ on $X^{d}$, and the comparison of
real and K\"{a}hler polarized Hilbert spaces. For this, we need to apply
the framework of \cite{FMN2} for non-abelian theta functions over
the moduli space of rank 2 vector bundles, with trivial
determinant, over $X$ (see Section 4). 
The so-called coherent state transform for Lie groups \cite{Ha}, is an analytic tool which, 
given an invariant Laplacian on a compact Lie group $K$, associates 
holomorphic functions on the complexification $K_\C$ to square integrable functions on $K$. This set up 
can be extended to appropriate spaces of distributions on $K$
\cite{FMN1}. Non-abelian theta functions of level $k$ on ${\mathcal 
M}_P$ are then described in terms of $Ad$-invariant
holomorphic functions on the group ${\rm SL}(2,\mathbb{C})$ with special 
quasi-periodicity properties. These holomorphic functions are obtained from elements in a
vector space of distributions on the compact real form ${\rm SU}(2)$ by
applying the coherent state transform, for time $1/(k+2)$ (see Theorem \ref{main}).

\section{A moduli space of parabolic vector bundles over
${\mathbb C}{\mathbb P}^1$}

Fix a point $p_0\, \in\, {\mathbb C}{\mathbb P}^1 \setminus \{0\, ,1
\, ,\infty\}$. Consider the divisor
\begin{equation}\label{e0}
S\, :=\, \{0\, ,1 \, ,\infty\, , p_0\}\, \subset\, {\mathbb C}
{\mathbb P}^1\, .
\end{equation}
Let
\begin{equation}\label{e1}
f\, :\, X\, \longrightarrow\, {\mathbb C}{\mathbb P}^1
\end{equation}
be the unique double cover ramified exactly over $D$. Therefore, 
$X$ is a complex elliptic curve. Let ${\rm Pic}^0(X)$ be the moduli
space of topologically trivial holomorphic vector bundles on $X$.

\begin{lemma}\label{lem1}
Any polystable vector bundle $E$ over $X$ of rank $r$ and
degree zero is isomorphic
to a direct sum $\bigoplus_{i=1}^r L_i$, where $L_i\, \in\,
{\rm Pic}^0(X)$.

The isomorphism classes of
line bundles $L_i$, $1\, \leq\, i\, \leq\, r$, are uniquely
determined by $E$ up to a permutation of $\{1\, ,\cdots\, ,r\}$.
\end{lemma}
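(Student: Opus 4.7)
The plan is to use the general structure theorem for polystable bundles combined with the special feature of an elliptic curve that forces stable bundles of slope zero to be line bundles.

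First I would unwind the definitions. By definition, a polystable bundle $E$ of slope $0$ (slope $= \deg/\mathrm{rk} = 0$ here) is a direct sum of stable bundles, each of the same slope $0$. So the entire content of the existence statement is the claim that a stable vector bundle of rank $r$ and degree $0$ on the elliptic curve $X$ must satisfy $r=1$, i.e.\ must be an element of $\mathrm{Pic}^0(X)$.

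Second, to prove this claim about stable bundles I would invoke one of two equivalent routes. The most transparent is the Narasimhan--Seshadri correspondence: on a compact Riemann surface, polystable bundles of degree $0$ correspond bijectively to unitary representations of the fundamental group. For the elliptic curve $X$ one has $\pi_1(X)\cong\Z^2$, which is abelian, so every finite-dimensional unitary representation is a direct sum of characters. Under the correspondence, stable bundles correspond to irreducible representations, which here are one-dimensional; the resulting line bundles lie in $\mathrm{Pic}^0(X)$. An alternative purely algebraic route is to quote Atiyah's classification of vector bundles on an elliptic curve, which shows that a stable bundle of rank $r$ and degree $d$ exists only when $\gcd(r,d)=1$; applied to $d=0$ this forces $r=1$.

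Third, for the uniqueness of the decomposition up to permutation, I would appeal to the Krull--Schmidt theorem, which holds in the category of coherent sheaves on a projective variety (and hence for vector bundles on $X$) because endomorphism rings of indecomposable objects are local. Since the summands in the decomposition $E\cong\bigoplus L_i$ are line bundles, they are indecomposable, so Krull--Schmidt forces the multiset $\{L_1,\dots,L_r\}$ to be an invariant of $E$.

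I do not anticipate any serious obstacle: the only non-formal input is the identification of stable bundles of slope $0$ with degree-zero line bundles, and this follows directly from either Narasimhan--Seshadri or Atiyah's classification. The remaining steps are formal consequences of the definition of polystability and Krull--Schmidt.
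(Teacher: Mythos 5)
Your proposal is correct and follows essentially the same route as the paper: the existence statement via Atiyah's classification of bundles on an elliptic curve (or, equivalently, via Narasimhan--Seshadri and the abelianness of $\pi_1(X)$), and the uniqueness via the Krull--Schmidt theorem for coherent sheaves, which is precisely the content of the reference the paper cites for that step. The only difference is that you spell out the reduction from polystable to stable bundles and the reason stable slope-zero bundles on an elliptic curve are line bundles, which the paper leaves implicit in its citations.
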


\begin{proof}
The first statement
follows immediately from Atiyah's classification of holomorphic
vector bundles on $X$ (see \cite{At2}). This also follows from
the facts that $E$ is given by a representation
of the abelian group $\pi_1(X)$ in ${\rm U}(r)$ \cite{NaSe}.

The uniqueness of $L_i$ up to a permutation of $\{1\, ,\cdots\, ,r\}$
follows immediately from \cite[p. 315, Theorem 2(ii)]{At1}.
\end{proof}

We will consider parabolic vector bundles over ${\mathbb C}{\mathbb 
P}^1$ with $S$ (see \eqref{e0}) as the parabolic divisor. Let $E$ be
a holomorphic vector bundle on ${\mathbb C}{\mathbb P}^1$. A
\textit{quasi--parabolic} structure on $E$ is a filtration
of subspaces
$$
E_{y} \,=:\, F_{y,1} \, \supsetneq\, \cdots\, \supsetneq\,
F_{y,j} \, \supsetneq\, \cdots\,
\supsetneq\, F_{y,a_y}\, \supsetneq\, F_{y,a_y+1} \,=\, 0
$$
over each point $y\, \in\, S$. A \textit{parabolic} structure on
$E$ is a quasi--parabolic structure as above together with real
numbers
\begin{equation}\label{e-1}
0\, \leq\, \alpha_{y,1} \, <\, \cdots\, <\, \alpha_{y,j}
\, <\, \cdots\, <\, \alpha_{y,a_y}\, <\, 1
\end{equation}
associated to the quasi--parabolic flags. (See \cite{MS}, \cite{MY}.)
The numbers $\alpha_{y,j}$
in \eqref{e-1} are called \textit{parabolic weights}.
The \textit{multiplicity} of the parabolic weight $\alpha_{y,j}$
is $\dim_{\mathbb C} F_{y,j}/F_{y,j+1}$.

For notational convenience, a parabolic vector bundle
$(E\, ,\{F_{y,j}\}\, ,\{\alpha_{y,j}\})$ defined as above will also
be denoted by $E_*$. The \textit{parabolic degree} is defined to be
$$
\text{par-deg}(E_*)\, :=\, \text{degree}(E)+\sum_{y\in S}
\sum_{j=1}^{a_y} \alpha_{y,j}\cdot \dim (F_{y,j}/F_{y,j+1})\, .
$$

Fix an integer $r\, \geq\, 2$. For each point $y\, \in\, S$, fix
an integer $m_y\, \in\, [0\, ,r]$. Let
${\mathcal M}_P$ be the moduli space of semistable parabolic vector 
bundles $E_*$ on ${\mathbb C}{\mathbb P}^1$ of rank $r$, with $S$
as the parabolic divisor, such that the
parabolic weights at a parabolic point $y$ are $1/2$ with 
multiplicity
$m_y$ and $0$ with multiplicity $r-m_y$, and
$$
\text{par-deg}(E_*) \, =\, 0\, .
$$
(See \cite{MY} for the construction of ${\mathcal M}_P$.)
The moduli spaces of parabolic bundles are irreducible normal
complex projective varieties. We will see later that the above moduli
space ${\mathcal M}_P$ is smooth. Note that ${\mathcal M}_P$ is
empty if $\sum_{y\in S} m_y$ is an odd integer.

We will assume that $\sum_{y\in S} m_y$ is an even integer.

For any integer $m\, \geq\, 1$, we will construct a finite group
$\Gamma_m$ equipped with an action of it on the Cartesian product
${\rm Pic}^0(X)^m$.

Let $\Sigma_m$ be the group of permutations of $\{1\, ,\cdots\, ,m\}$.
This group acts on the Cartesian product $({\mathbb Z}/2{\mathbb Z}
)^m$ by permuting the factors. So any permutation $\tau\, \in\,
\Sigma_m$ of $\{1\, ,\cdots\, ,m\}$ sends any $(z_1,\cdots,z_m) \, \in\,
({\mathbb Z}/2{\mathbb Z} )^m$ to $(z_{\tau^{-1}(1)},\cdots,z_{\tau^{-1}(m)})$.
Let
\begin{equation}\label{ni3}
\Gamma_m\, :=\, ({\mathbb Z}/2{\mathbb Z})^m\rtimes \Sigma_m
\end{equation}
be the semi-direct product corresponding to this action. So
$\Gamma_m$ fits in a short exact sequence
\begin{equation}\label{esg}
e\,\longrightarrow\, ({\mathbb Z}/2{\mathbb Z})^m
\,\longrightarrow\,\Gamma_m\,\longrightarrow\, \Sigma_m
\,\longrightarrow\, e
\end{equation}
of groups. We will construct a natural action of $\Gamma_m$ on
${\rm Pic}^0(X)^m$.

Consider the action of group ${\mathbb Z}/2{\mathbb Z}$ on ${\rm 
Pic}^0(X)$ defined by the involution $L\,\longmapsto\, L^*$.
Acting coordinate-wise, it
produces an action of $({\mathbb Z}/2{\mathbb Z})^m$ on
${\rm Pic}^0(X)^m$. On the other hand, the permutation group
$\Sigma_m$ acts on ${\rm Pic}^0(X)^m$; as before, the action of any
$\tau\, \in\, \Sigma_m$ sends any $(z_1,\cdots,z_m) \, \in\,
{\rm Pic}^0(X)^m$ to $(z_{\tau^{-1}(1)},\cdots,z_{\tau^{-1}(m)})$. These
two actions together produce an action of $\Gamma_m$ (constructed
in \eqref{ni3}) on ${\rm Pic}^0(X)^m$. Let
\begin{equation}\label{e3}
{\rm Pic}^0(X)^m\, \longrightarrow\, {\rm Pic}^0(X)^m/\Gamma_m
\end{equation}
be the quotient for this action. The quotient
${\rm Pic}^0(X)^m/({\mathbb Z}/2{\mathbb Z})^m$ for the subgroup
in \eqref{esg} is identified with
$({\rm Pic}^0(X)/({\mathbb Z}/2{\mathbb Z}))^m$. Hence
$$
{\rm Pic}^0(X)^m/\Gamma_m\, =\, \text{Sym}^m({\rm Pic}^0(X)/
({\mathbb Z}/2{\mathbb Z}))\, .
$$
Since ${\rm Pic}^0(X)/({\mathbb Z}/2{\mathbb Z})\, =\,
{\mathbb C}{\mathbb P}^1$, we have
$$
{\rm Pic}^0(X)^m/\Gamma_m\, =\,\text{Sym}^m({\mathbb C}{\mathbb P}^1)
\, =\,{\mathbb C}{\mathbb P}^m\, .
$$

Note that the quotient map in \eqref{e3}
factors through the projection
$$
{\rm Pic}^0(X)^m\, \longrightarrow\, \text{Sym}^m({\rm Pic}^0(X))
\,:=\, {\rm Pic}^0(X)^m/\Sigma_m\, .
$$
But the surjective map
$$\text{Sym}^m({\rm Pic}^0(X))
\, \longrightarrow\,{\rm Pic}^0(X)^m/\Gamma_m$$ in general is not
a quotient for a group action because $\Sigma_m$ is not a normal
subgroup of $\Gamma_m$.

\begin{proposition}\label{prop1}
Let $d$ be the (complex) dimension of ${\mathcal M}_P$. Then
$d\, \leq\,
r/2$. If $d\, >\, 0$, then the variety ${\mathcal M}_P$ is
canonically isomorphic to the quotient ${\rm Pic}^0(X)^d/
\Gamma_d$ constructed in \eqref{e3}.
\end{proposition}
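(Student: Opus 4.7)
The plan is to transport the classification problem to the elliptic curve $X$ via the correspondence of \cite{Bi1} between parabolic bundles on $\mathbb{C}\mathbb{P}^1$ with parabolic divisor $S$ and parabolic weights in $\{0,1/2\}$, and $\mathbb{Z}/2\mathbb{Z}$-equivariant holomorphic vector bundles on $X$, where $\mathbb{Z}/2\mathbb{Z}$ acts by the covering involution $\sigma$ of $f$. Under this dictionary a parabolic bundle $E_*$ of rank $r$ corresponds to a rank-$r$ bundle $V$ on $X$ with a lift $\tilde\sigma$ of $\sigma$; the multiplicity $m_y$ of the parabolic weight $1/2$ at $y\in S$ equals the dimension of the $(-1)$-eigenspace of $\tilde\sigma$ on the fiber of $V$ over $f^{-1}(y)$, and parabolic degree zero translates to $\deg V=0$. (Semi)polystability is preserved, so polystable points of $\M_P$ correspond to polystable equivariant bundles $(V,\tilde\sigma)$ on $X$.

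Given such a polystable $(V,\tilde\sigma)$, Lemma \ref{lem1} writes $V\cong\bigoplus_{i=1}^r L_i$ with $L_i \in {\rm Pic}^0(X)$, uniquely determined up to permutation. Since $\sigma$ is the hyperelliptic involution of $X$, the pullback $\sigma^*$ acts on ${\rm Pic}^0(X)$ as $L\mapsto L^*$. The isomorphism $\sigma^*V\cong V$ coming from $\tilde\sigma$, together with the uniqueness clause of Lemma \ref{lem1}, forces the multiset $\{L_i\}$ to be invariant under $L\leftrightarrow L^*$. Splitting off the $2$-torsion summands (those with $L_i\cong L_i^*$) and pairing the remaining ones yields
\[
V \,\cong\, \Big(\bigoplus_{i=1}^d (L_i \oplus L_i^*)\Big) \oplus V_{\rm tors},
\]
with $L_i^*\not\cong L_i$ in each pair. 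Because ${\rm Pic}^0(X)$ contains only four $2$-torsion points, the summand $V_{\rm tors}$ varies in a finite set constrained by the numbers $m_y$ through the eigenspace decomposition at the fixed points of $\sigma$; it therefore contributes no continuous moduli. All continuous moduli come from the unordered collection $\{L_1,\ldots,L_d\}$ taken up to the factorwise involution $L_i\leftrightarrow L_i^*$.

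This parameter space is precisely the quotient ${\rm Pic}^0(X)^d/\Gamma_d$ of \eqref{e3}: the permutations $\Sigma_d$ account for reordering the pairs $L_i\oplus L_i^*$, while the factor $(\mathbb{Z}/2\mathbb{Z})^d$ records the intrinsic swap of $L_i$ and $L_i^*$ within each pair. I would then check that the natural assignment $(L_1,\ldots,L_d)\mapsto E_*$ descends to a morphism ${\rm Pic}^0(X)^d/\Gamma_d \longrightarrow \M_P$ that is bijective on closed points---using that on each pair $L\oplus L^*$ with $L\not\cong L^*$ the lift of $\sigma$ interchanging the two summands is unique up to isomorphism of equivariant bundles---and, since both sides are normal projective varieties of the same dimension, conclude that it is an isomorphism. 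The rank identity $r = 2d + {\rm rank}(V_{\rm tors})$ immediately gives $d\leq r/2$.

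The main technical obstacle will be the equivariant bookkeeping: verifying that the lift $\tilde\sigma$ contributes no hidden continuous moduli beyond those captured by the line bundles $L_i$, and that the allowed choices of $V_{\rm tors}$ together with the scalars realizing $\tilde\sigma$ on each $2$-torsion summand are entirely prescribed by the fixed combinatorial input $(m_y)_{y\in S}$. Once this rigidity of lifts is in place, the isomorphism and the inequality both follow from the uniqueness part of Lemma \ref{lem1} combined with the explicit quotient description already worked out in the preceding paragraphs.
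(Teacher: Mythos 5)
Your proposal follows essentially the same route as the paper: transport to $\mathbb{Z}/2\mathbb{Z}$-equivariant bundles on $X$ via \cite{Bi1}, decompose using Lemma \ref{lem1}, use $\sigma^*L\cong L^*$ to pair the summands into $\xi_i\oplus\xi_i^*$ plus fixed order-two line bundles, and identify the resulting parameter space with ${\rm Pic}^0(X)^d/\Gamma_d$, with the isomorphism coming from Atiyah's uniqueness of the decomposition. The only cosmetic difference is that you flag the rigidity of the lift $\tilde\sigma$ and invoke normality to upgrade bijectivity to an isomorphism, points the paper passes over more briefly.
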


\begin{proof}
Let
\begin{equation}\label{sigma}
\sigma\, :\, X\, \longrightarrow\, X
\end{equation}
be the unique nontrivial deck transformation for the covering $f$
in \eqref{e1}.

Let $E_*\, \in\, {\mathcal M}_P$ be a polystable parabolic vector
bundle. It corresponds to a unique holomorphic vector bundle
$V\, \longrightarrow\, X$ equipped with a lift of the involution
$\sigma$ in \eqref{sigma} as an isomorphism of order two
\begin{equation}\label{ni2}
\widetilde{\sigma}\, :\,V\, \longrightarrow\, \sigma^* V
\end{equation}
of vector bundles \cite{Bi1}; this means that $\widetilde{\sigma}$
is a holomorphic isomorphism of vector bundles, and the composition
$$
V\, \stackrel{\widetilde{\sigma}}{\longrightarrow}\, \sigma^* V
\, \stackrel{\sigma^*\widetilde{\sigma}}{\longrightarrow}\, 
\sigma^*\sigma^* V\,=\, V
$$
is the identity map. We have
$$
\text{degree}(V) \, =\, 0
$$
because $\text{par-deg}(E_*) \, =\, 0$ \cite[p. 318, (3.12)]{Bi1}.
The vector bundle $V$ is polystable because $E_*$ is polystable
\cite[pp. 350--351, Theorem 4.3]{BBN}. Therefore, from Lemma
\ref{lem1} we know that
\begin{equation}\label{ni1}
V\, =\, \bigoplus_{i=1}^r L_i\, ,
\end{equation}
where $L_i\, \in\, {\rm Pic}^0(X)$. Recall from Lemma \ref{lem1}
that the line bundles $L_i$ are uniquely determined up to
a permutation.

For any line bundle $L$ on $X$ of degree zero, the line bundle
$L\otimes \sigma^*L$ descends to $ {\mathbb C}{\mathbb P}^1$,
where $\sigma$ is defined in \eqref{sigma}.
Since ${\rm Pic}^0({\mathbb C}{\mathbb P}^1)\,=\,
\{{\mathcal O}_{{\mathbb C}{\mathbb P}^1}\}$, it follows that
\begin{equation}\label{sigma2}
\sigma^* L\, =\, L^*
\end{equation}
for all $L\, \in\, {\rm Pic}^0(X)$.

Since $V$ in \eqref{ni1} is isomorphic to $\sigma^* V$ (see
\eqref{ni2}), using \eqref{sigma2},
\begin{equation}\label{e4}
\bigoplus_{i=1}^r L_i\,=\, \bigoplus_{i=1}^r L^*_i\, .
\end{equation}
Therefore, all vector bundles on $X$ corresponding to points
of ${\mathcal M}_P$ are of the form
\begin{equation}\label{e5}
\bigoplus_{i=1}^a (\xi_i\oplus \xi^*_i) \oplus \bigoplus_{j=1}^{r-2a}
\eta_j\, , 
\end{equation}
where $\eta_j$ are fixed line bundles on $X$ (these line bundles
$\eta_j$ depend on the numbers $m_y$ but are independent of the
point of the moduli space ${\mathcal M}_P$), and
the line bundles $\xi_i$, $1\,\leq\, i\,\leq\, a$, move over
${\rm Pic}^0(X)$. From \eqref{e4} it follows that 
\begin{equation}\label{e6}
\eta_j\, =\, \eta^*_j
\end{equation}
for all $j$. We note that any vector bundle as in \eqref{e5}
satisfying \eqref{e6} admits
a lift of the involution $\sigma$. Indeed,
each $\eta_j$ has a lift because \eqref{e6} holds. Also,
$\xi_i\oplus \xi^*_i$ has a natural lift of the
involution $\sigma$
because $\sigma^*\xi_i\,=\, \xi^*_i$. Note that the
involution of $\xi_i\oplus \xi^*_i$ interchanges the
two direct summands.

Hence we get a surjective morphism
\begin{equation}\label{m}
{\rm Pic}^0(X)^a\, \longrightarrow\, {\mathcal M}_P
\end{equation}
that sends any $(\xi_1\, ,\cdots \, , \xi_a)$ to
$$
\bigoplus_{i=1}^a (\xi_i\oplus \xi^*_i) \oplus \bigoplus_{j=1}^{r-2a}
\eta_j\, .
$$
This morphism clearly factors through the quotient
${\rm Pic}^0(X)^a/\Gamma_a$ in \eqref{e3}.

For a vector bundle
$$
W\,=\, \bigoplus_{i=1}^a (\xi_i\oplus \xi^*_i)\, ,
$$
the unordered pairs $\{\xi_i\, ,\xi^*_i\}$ are uniquely
determined by $W$ up to a permutation of $\{1\, ,\cdots\, ,a\}$
\cite[p. 315, Theorem 2(ii)]{At1}.
Using this it follows that the above morphism
$$
{\rm Pic}^0(X)^a/\Gamma_a\, \longrightarrow\,{\mathcal M}_P
$$
is an isomorphism. This completes the proof of the proposition.
\end{proof}

\section{Determinant line bundle and K\"ahler form on
${\mathcal M}_P$}

Consider the moduli space ${\mathcal M}_P$ of parabolic
vector bundles defined in the previous section. It has a natural
(possibly singular) K\"ahler form; this K\"ahler form will be denoted
by $\omega_P$. There is a determinant line bundle
\begin{equation}\label{zeta}
\zeta\, \longrightarrow\, {\mathcal M}_P\, .
\end{equation}
This line bundle $\zeta$ has a hermitian structure such that the
curvature of the corresponding Chern connection coincides
with $\omega_P$. (See \cite{BR}, \cite{Bi2}, \cite{TZ}.)

Consider the dimension
$d$ in Proposition \ref{prop1}. Let $\Gamma_d$ be the group
defined in \eqref{ni3}. The quotient
${\rm Pic}^0(X)^d/\Gamma_d$ is identified with the moduli space
${\mathcal M}_P$ by Proposition \ref{prop1}. Let
\begin{equation}\label{phi}
\phi\, :\, {\rm Pic}^0(X)^d\, \longrightarrow\,
{\rm Pic}^0(X)^d/\Gamma_d\,=\, {\mathcal M}_P
\end{equation}
be the morphism in \eqref{m}. Since $\phi$ is the quotient
map for the
action of $\Gamma_d$ on ${\rm Pic}^0(X)^d$, the pulled back
line bundle $\phi^*\zeta$ is equipped with a lift of the action
of $\Gamma_d$ on ${\rm Pic}^0(X)^d$, where $\zeta$ is the determinant
line bundle in \eqref{zeta}.

Let
\begin{equation}\label{l0}
L_0\, :=\, {\mathcal O}_{{\rm Pic}^0(X)}({\mathcal O}_X)\,
\longrightarrow\, \text{Pic}^0(X)
\end{equation}
be the holomorphic line bundle of degree one
defined by the point of $\text{Pic}^0(X)$
corresponding to the trivial line bundle ${\mathcal O}_X$ on
$X$. For each $i\, \in\, [1\, ,d]$, let
\begin{equation}\label{qi}
q_i\, :\, {\rm Pic}^0(X)^d\, \longrightarrow\, {\rm Pic}^0(X)
\end{equation}
be the projection to the $i$--th factor. The action of
$\Sigma_d$ on ${\rm Pic}^0(X)^d$ that permutes the factors
in the Cartesian product has a natural lift to an action of
$\Sigma_d$ on the line bundle
$$
\bigotimes_{i=1}^d q^*_i L_0\, \longrightarrow\, {\rm Pic}^0(X)^d\, ,
$$
where $L_0$ is the line bundle in \eqref{l0}.

Recall that the group $({\mathbb Z}/2{\mathbb Z})^d$ acts on
${\rm Pic}^0(X)^d$ using the action of ${\mathbb Z}/2{\mathbb Z}$
on ${\rm Pic}^0(X)$ given by the involution $L\, \longrightarrow\,
L^*$. Let
\begin{equation}\label{in4}
{\sigma}_d\, :\, ({\mathbb Z}/2{\mathbb Z})^d\,
\longrightarrow\, \text{Aut}({\rm Pic}^0(X)^d)
\end{equation}
be the corresponding homomorphism.

\begin{theorem}\label{thm1}
For any $g\, \in\, ({\mathbb Z}/2{\mathbb Z})^d$,
there is a canonical isomorphism of holomorphic line bundles
$$
{\sigma}_d(g)^*(\bigotimes_{i=1}^d q^*_i L_0)\,
\stackrel{\sim}{\longrightarrow}\,\bigotimes_{i=1}^d q^*_i L_0\, ,
$$
where ${\sigma}_d$ is the homomorphism in \eqref{in4}, and $L_0$
is the line bundle in \eqref{l0}.

The line bundle $(\bigotimes_{i=1}^d q^*_i L_0)^{\otimes 2}$ has a
canonical lift of the action of $\Gamma_d$ on ${\rm Pic}^0(X)^d$.

There is a $\Gamma_d$--equivariant isomorphism of line bundles
$$
(\bigotimes_{i=1}^d q^*_i L_0)^{\otimes 2}\, 
\stackrel{\sim}{\longrightarrow}\, \phi^*\zeta\, ,
$$
where $\phi$ and $\zeta$ are defined in \eqref{phi} and
\eqref{zeta} respectively.
\end{theorem}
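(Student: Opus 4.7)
The proof naturally splits into three parts, matching the three assertions of the theorem.

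For the first assertion, the strategy is to exploit the symmetry of the divisor defining $L_0$. Since the trivial bundle satisfies $\mathcal{O}_X^*\,=\,\mathcal{O}_X$, the point $\mathcal{O}_X\,\in\,{\rm Pic}^0(X)$ is fixed by the involution $\iota\,:\,L\,\longmapsto\, L^*$, so the divisor $(\mathcal{O}_X)$ used in \eqref{l0} is $\iota$--invariant. Therefore $\iota^*L_0\,=\,\mathcal{O}({\iota^{-1}(\mathcal{O}_X)})\,=\,\mathcal{O}(\mathcal{O}_X)\,=\,L_0$ canonically. Applying this coordinate-wise in each factor where $g_i\,\neq\, 0$ produces the desired canonical isomorphism $\sigma_d(g)^*(\bigotimes_{i=1}^d q_i^*L_0)\,\cong\,\bigotimes_{i=1}^d q_i^*L_0$ for every $g\,\in\,({\mathbb Z}/2{\mathbb Z})^d$.

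For the second assertion, the permutation action of $\Sigma_d$ has the obvious lift to $\bigotimes_{i=1}^d q_i^*L_0$ given by permuting tensor factors, so combined with the inversions one obtains at least a projective action of $\Gamma_d$. The potential obstruction to an honest action lies in a sign ambiguity at the 2-torsion fixed points of $\iota$, which is a well-known feature of lifts of inversion to an ample symmetric line bundle on an abelian variety. By Mumford's theory of totally symmetric line bundles, the square of any ample symmetric line bundle on an abelian variety carries a canonical linearization of the inversion group; applying this factor by factor shows that the ambiguity disappears on $(\bigotimes_{i=1}^d q_i^*L_0)^{\otimes 2}$, producing the claimed canonical $\Gamma_d$--equivariant structure.

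For the third assertion, one must compute $\phi^*\zeta$ explicitly. Using the universal family of $\sigma$--equivariant bundles on $X$ described in Proposition \ref{prop1}, the universal parabolic bundle pulls back along $\phi$ to a family of the form $\bigoplus_{i=1}^d (\mathcal{P}_i\oplus\mathcal{P}_i^*)\oplus\bigoplus_{j} \pi^*\eta_j$, where $\mathcal{P}_i$ is the normalized Poincar\'e bundle pulled back through $q_i$ and $\pi$ is the projection from the family space. The determinant line bundle $\zeta$ is assembled from the determinant of cohomology together with the parabolic correction terms coming from the filtrations at the points of $S$ with weight $1/2$ \cite{BR,Bi2,TZ}. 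A direct calculation, using $\sigma^*\xi_i\,=\,\xi_i^*$ and the K\"unneth decomposition, shows that the determinant-of-cohomology part restricts to each factor as $L_0$, while the parabolic correction from the four half-integer weights contributes a second copy of $L_0$ on each factor. Hence $\phi^*\zeta\,\cong\,\bigotimes_{i=1}^d q_i^*L_0^{\otimes 2}$. The equivariance for $\Gamma_d$ follows because both sides are defined by constructions which are manifestly natural with respect to permutations and inversions in ${\rm Pic}^0(X)^d$, so any isomorphism of line bundles can be averaged (or, equivalently, the isomorphism is canonical on each orbit closure).

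The main obstacle is the explicit computation in the third step: one must carefully keep track both of the determinant-of-cohomology contribution and of the parabolic correction at the four points of $S$, and verify that each factor of $\xi_i$ contributes precisely $L_0^{\otimes 2}$, accounting for the ``doubling'' that was already anticipated in the introduction. Once this identification is in place, parts one and two guarantee that the isomorphism can be promoted to a $\Gamma_d$--equivariant one.
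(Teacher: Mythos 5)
Your treatment of the first two assertions is sound and essentially matches the paper: the first part is exactly the paper's argument (the involution $L\mapsto L^*$ fixes the point $\mathcal{O}_X$, hence fixes the divisor defining $L_0$, applied coordinate-wise), and your appeal to Mumford's totally symmetric line bundles for the second part is a legitimate repackaging of what the paper does by hand, namely normalizing the isomorphism $\sigma_d(g)^*\bigl(\bigotimes_{i=1}^d q_i^*L_0\bigr)\to\bigotimes_{i=1}^d q_i^*L_0$ at the fixed point $z_0=(\mathcal{O}_X,\cdots,\mathcal{O}_X)$ and passing to the square via the canonical lift on $\bigl(\bigotimes_{i=1}^d q_i^*L_0\bigr)\otimes\sigma_d(g)^*\bigl(\bigotimes_{i=1}^d q_i^*L_0\bigr)$.

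The gap is in the third assertion, which is the heart of the theorem. You propose to compute $\phi^*\zeta$ directly from the definition of the parabolic determinant bundle on $\mathbb{CP}^1$, splitting it into a determinant-of-cohomology part and parabolic correction terms at the four points of $S$, and you assert that ``a direct calculation shows'' each part contributes one copy of $L_0$ per factor. That calculation is never performed, and the asserted splitting of the factor of $2$ (one copy from $\det R\pi_*$ on $\mathbb{CP}^1$, one from the weight-$1/2$ flags) is precisely the nontrivial content that needs proof; it is not obviously correct, and it is not how the factor of $2$ actually arises in the paper. The paper bypasses this computation: it introduces $\gamma:\mathcal{M}_P\to\mathcal{N}_X(r)$ to the moduli space of degree-zero semistable bundles on the elliptic curve, invokes the known comparison $\gamma^*\zeta_r\cong\zeta$ from \cite{BR}, \cite{Bi2}, and then computes $\beta^*\zeta_r$ for $\beta=\gamma\circ\phi$, which sends $(L_1,\cdots,L_d)$ to $\bigoplus_i(L_i\oplus L_i^*)\oplus\bigoplus_j\eta_j$; the square appears because each coordinate enters twice (as $L_i$ and as $L_i^*$, using $\sigma_1^*L_0=L_0$), while the fixed $\eta_j$ contribute trivially. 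Without either carrying out your direct computation on $\mathbb{CP}^1$ or citing the comparison with the determinant bundle on $\mathcal{N}_X(r)$, the third assertion remains unproved. A secondary problem: your equivariance argument by ``averaging'' an isomorphism of line bundles over $\Gamma_d$ does not make sense (nonzero scalars cannot be averaged multiplicatively to produce an intertwiner); the correct argument is that the canonical isomorphism is natural with respect to the two actions, or that the two $\Gamma_d$-lifts it relates agree on the fiber over the fixed point $z_0$ and hence everywhere.
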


\begin{proof}
Consider the automorphism of $\text{Pic}^0(\text{Pic}^0(X))$ induced by 
the involution of ${\rm Pic}^0(X)$ defined by
$L\, \longmapsto\, L^*$. It fixes the line bundle $L_0$
defined in \eqref{l0}, because
the above involution $L\, \longmapsto\, L^*$
fixes the point of ${\rm Pic}^0(X)$ corresponding
to the trivial line bundle ${\mathcal O}_X$ on
$X$. Note that $\text{Pic}^0(\text{Pic}^0(X))$ is identified
with ${\rm Pic}^0(X)$ by sending any $\xi\in {\rm Pic}^0(X)$ to $\mathcal{O}_{{\rm Pic}^0(X)}(\xi - \mathcal{O}_X)$. 
This identification commutes with the
involutions. Since the point of ${\rm Pic}^0(X)$
corresponding to ${\mathcal O}_X$ is fixed by the involution,
it follows that
\begin{equation}\label{ui}
{\sigma}_d(g)^*(\bigotimes_{i=1}^d q^*_i L_0)\, =\,
\bigotimes_{i=1}^d q^*_i L_0
\end{equation}
for all $g\, \in\, ({\mathbb Z}/2{\mathbb Z})^d$. This proves the
first statement of the theorem.

Let $z_0\, :=\, ({\mathcal O}_X\, ,\cdots\, ,{\mathcal O}_X)\,
\in\, {\rm Pic}^0(X)^d$ be the point. Note that
${\sigma}_d(g)(z_0)\, =\, z_0$ for all $g\, \in\, ({\mathbb Z}/
2{\mathbb Z})^d$. In view of \eqref{ui},
there is a unique isomorphism
\begin{equation}\label{rho}
\rho\, :\, {\sigma}_d(g)^*\bigotimes_{i=1}^d q^*_i L_0\, 
\longrightarrow\,\bigotimes_{i=1}^d q^*_i L_0
\end{equation}
which coincides with the identity map of the fiber
$(\bigotimes_{i=1}^d q^*_i L_0)_{z_0}$ over the point $z_0$.

Consider the action of $({\mathbb Z}/2\mathbb Z)^d$ on
${\rm Pic}^0(X)^d$ defined by the homomorphism ${\sigma}_d$
in \eqref{in4}. For any $g\, \in\, ({\mathbb Z}/2\mathbb Z)^d$,
there is a canonical lift of the involution
${\sigma}_d(g)$ of ${\rm Pic}^0(X)^d$ to the line bundle
$$
(\bigotimes_{i=1}^d q^*_i L_0)\otimes
{\sigma}_d(g)^*(\bigotimes_{i=1}^d q^*_i L_0)\, .
$$
Using the isomorphism $\rho$ in \eqref{rho}, these lifts
of the involutions ${\sigma}_d(g)$, $g\, \in\, ({\mathbb Z}/2\mathbb 
Z)^d$, together produce a lift of the action of $({\mathbb Z}/2\mathbb
Z)^d$ on ${\rm Pic}^0(X)^d$ 
to the line bundle $$(\bigotimes_{i=1}^d q^*_i L_0)^{\otimes 2}
\, \longrightarrow\, {\rm Pic}^0(X)^d\, .$$

We already noted that the action of 
$\Sigma_d$ on ${\rm Pic}^0(X)^d$ has a natural lift to an action of
$\Sigma_d$ on the line bundle $\bigotimes_{i=1}^d q^*_i L_0$. This
lift to $\bigotimes_{i=1}^d q^*_i L_0$ produces a lift to 
$(\bigotimes_{i=1}^d q^*_i L_0)^{\otimes 2}$
of the action of $\Sigma_d$ on ${\rm Pic}^0(X)^d$. This action
of $\Sigma_d$ on $(\bigotimes_{i=1}^d q^*_i L_0)^{\otimes 2}$
and the action of $({\mathbb Z}/2\mathbb
Z)^d$ on $(\bigotimes_{i=1}^d q^*_i L_0)^{\otimes 2}$ constructed
above together produce
a lift to $(\bigotimes_{i=1}^d q^*_i L_0)^{\otimes 2}$
of the action of $\Gamma_d$ on ${\rm Pic}^0(X)^d$. This proves the
second statement of the theorem.

Let ${\mathcal N}_X(r)$ denote the moduli space of semistable
vector bundles on $X$ of rank $r$ and degree zero. So,
${\mathcal N}_X(r)\,=\, {\rm Sym}^r({\rm Pic}^0(X))\, :=\,
{\rm Pic}^0(X)^r/\Sigma_r$.

Let
$$
\beta\, :\, {\rm Pic}^0(X)^d\, \longrightarrow\, {\mathcal N}_X(r)
$$
be the morphism defined by
\begin{equation}\label{beta2}
(L_1\, ,\cdots \, ,L_d)\, \longmapsto\,
\bigoplus_{i=1}^d (L_i\oplus L^*_i) \oplus \bigoplus_{j=1}^{r-2d}
\eta_j
\end{equation}
(see \eqref{e5}). Let
\begin{equation}\label{ga2}
\gamma\, :\, {\mathcal M}_P\, \longrightarrow\, {\mathcal N}_X(r)
\end{equation}
be the morphism that sends any parabolic vector bundle on
${\mathbb C}{\mathbb P}^1$ to the corresponding vector
bundle on $X$ (see the proof of Proposition \ref{prop1}). Clearly,
\begin{equation}\label{c}
\beta\, :=\, \gamma\circ\phi\, ,
\end{equation}
where $\phi$ is constructed in \eqref{phi}.

Let $\zeta_r$ be the determinant line bundle on the moduli
space ${\mathcal N}_X(r)$. We will quickly recall the
definition/construction of $\zeta_r$. Let
\begin{equation}\label{po}
{\mathcal P} \, \longrightarrow\, X\times{\rm Pic}^0(X)
\end{equation}
be a Poincar\'e line bundle; this means that for each point
$\alpha\, \in\, {\rm Pic}^0(X)$, the restriction
${\mathcal P}\vert_{X\times\{\alpha\}}$ lies in the isomorphism
class of line bundles defined by the point $\alpha$. Let
\begin{equation}\label{pi2}
\pi_2\, :\, X\times{\rm Pic}^0(X)\, \longrightarrow\,{\rm Pic}^0(X)
\end{equation}
be the natural projection. Define the line bundle
$$
{\mathcal L}\, :=\, (\bigwedge^{\rm top} R^0\pi_{2*}{\mathcal P})^*
\otimes \bigwedge^{\rm top} R^1\pi_{2*}{\mathcal P}
\, \longrightarrow\,{\rm Pic}^0(X)\, .
$$
It can be shown that ${\mathcal L}$ is independent of the choice
of the Poincar\'e line bundle $\mathcal P$. To see this note that
any other Poincar\'e bundle is of the form ${\mathcal P}_1\, :=\,
{\mathcal P}\otimes \pi^*_2 
A$, where $A$ is a line bundle on
${\rm Pic}^0(X)$. Form the projection formula,
$$
(\bigwedge^{\rm top} R^0\pi_{2*}{\mathcal P})^*
\otimes \bigwedge^{\rm top} R^1\pi_{2*}{\mathcal P}\,=\,
(\bigwedge^{\rm top} R^0\pi_{2*}{\mathcal P}_1)^*
\otimes (\bigwedge^{\rm top} R^1\pi_{2*}{\mathcal P}_1)\otimes
A^{\otimes \chi}\, ,
$$
where $\chi$ is the Euler characteristic of degree zero line bundles
on $X$. Since $\chi\, =\, 0$, we have ${\mathcal L}\,=\,
(\bigwedge^{\rm top} R^0\pi_{2*}{\mathcal P}_1)^*
\otimes\bigwedge^{\rm top} R^1\pi_{2*}{\mathcal P}_1$.

We will show that ${\mathcal L}$ coincides with $L_0$ defined
in \eqref{l0}. To prove this, we first note that ${\mathcal O}_X$
is the unique line
bundle on $X$ such that $\chi({\mathcal O}_X)\,=\, 0\, \not=\,
H^0(X,\, {\mathcal O}_X)$. From this it follows that the point
of ${\rm Pic}^0(X)$ defined by ${\mathcal O}_X$ is the
canonical theta divisor. This immediately implies that
${\mathcal L}$ is canonically identified with $L_0$.

For each
$i\,\in\, [1\, ,r]$, let $\overline{q}_i$ be the projection of
${\rm Pic}^0(X)^r$ to the $i$--th factor. The line bundle
\begin{equation}\label{f1}
\bigotimes_{i=1}^r\overline{q}^*_i{\mathcal L}\, \longrightarrow
\,{\rm Pic}^0(X)^r
\end{equation}
has a natural action of the group $\Sigma_r$ of permutations of
$\{1\, ,\cdots\, ,r\}$. Using this action, the line bundle
in \eqref{f1}
descends to the quotient ${\rm Sym}^r({\rm Pic}^0(X))$
of ${\rm Pic}^0(X)^r$. This descended line bundle is the
determinant line bundle $\zeta_r$ on ${\mathcal N}_X(r)\,
=\, {\rm Sym}^r({\rm Pic}^0(X))$.

For the map $\gamma$ in \eqref{ga2}, the pullback $\gamma^*\zeta_r$
coincides with the determinant line bundle $\zeta$ on
${\mathcal M}_P$ \cite{BR}, \cite{Bi2}. Therefore, from
\eqref{c} we get an isomorphism
\begin{equation}\label{c1}
\phi^*\zeta\,\stackrel{\sim}{\longrightarrow}\,\beta^*\zeta_r\, .
\end{equation}
Using the fact that each $\eta_j$ in \eqref{beta2} is a fixed
line bundle of order
two, from the construction of $\zeta_r$ described above
it is easy to see that $\beta^*\zeta_r$ has a canonical
lift of the action of $\Gamma_d$ on ${\rm Pic}^0(X)^d$. As noted
earlier, the line bundle $\phi^*\zeta$ is equipped with an
action of $\Gamma_d$, where $\phi$ is constructed
in \eqref{phi}. It is straightforward 
to check that the
isomorphism in \eqref{c1} intertwines the actions of $\Gamma_d$.

For any Poincar\'e line bundle ${\mathcal P} \, \longrightarrow\, 
X\times{\rm Pic}^0(X)$ (see \eqref{po}), the pullback
$$
(\text{Id}_X\times \sigma_1)^*{\mathcal P}^* \, \longrightarrow\,
X\times{\rm Pic}^0(X)
$$
is also a Poincar\'e line bundle, where $\sigma_1$ is the
involution in \eqref{in4} defined by $L\, \longmapsto\, L^*$. 
Therefore,
\begin{equation}\label{f2}
(\bigwedge^{\rm top} R^0\pi_{2*}{\mathcal P}^*)^*
\otimes \bigwedge^{\rm top} (R^1\pi_{2*}{\mathcal P}^*)
\,=\, \sigma^*_1 {\mathcal L}\, ,
\end{equation}
where $\pi_2$ is the projection in \eqref{pi2}.
Since ${\mathcal L}\,=\, L_0$,
\begin{equation}\label{f3}
\sigma^*_1 {\mathcal L}\,=\, \sigma^*_1 L_0\,=\, L_0\, ;
\end{equation}
the last isomorphism follows from the fact that
the point of ${\rm Pic}^0(X)$ corresponding to ${\mathcal O}_X$
is fixed by $\sigma_1$ (see also \eqref{l0}). Combining
\eqref{f2} and \eqref{f3},
\begin{equation}\label{di}
(\bigwedge^{\rm top} R^0\pi_{2*}{\mathcal P}^*)^* \otimes
\bigwedge^{\rm top} (R^1\pi_{2*}{\mathcal P}^*)\,=\, L_0\, .
\end{equation}
Using \eqref{di}, from the constructions of the line bundle
$\zeta_r$ and the morphism $\beta$ in \eqref{beta2} it follows
that 
$$
\beta^*\zeta_r\,=\, (\bigotimes_{i=1}^d q^*_i L_0)^{\otimes 2}\, .
$$
In the second part of the theorem we constructed an action
of the group $\Gamma_d$ on the line bundle
$(\bigotimes_{i=1}^d q^*_i L_0)^{\otimes 2}$. We noted earlier that
$\beta^*\zeta_r$ is equipped with a lift of the action of $\Gamma_d$
on ${\rm Pic}^0(X)^d$. The above isomorphism of $\beta^*\zeta_r$
with $(\bigotimes_{i=1}^d q^*_i L_0)^{\otimes 2}$
is $\Gamma_d$--equivariant. In view of the fact, noted earlier,
that the isomorphism in \eqref{c1} intertwines the actions
of $\Gamma_d$, this completes the proof of the theorem.
\end{proof}

There is a unique translation invariant K\"ahler form
$h_0$ on $\text{Pic}^0(X)$ of total volume one. Let
$$
\omega\, :=\, \sum_{i=1}^d q^*_i h_0
$$
be the K\"ahler form on ${\rm Pic}^0(X)^d$, where $q_i$ is
the projection in \eqref{qi}. As before, the
K\"ahler form on ${\mathcal M}_P$ will be denoted by $\omega_P$.

\begin{proposition}\label{prop2}
For the morphism $\phi$ in \eqref{phi},
$$
\phi^*\omega_P\,=\, 2\omega\, .
$$
\end{proposition}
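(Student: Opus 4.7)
\emph{Proof plan.} The plan is to compute $\phi^*\omega_P$ by exploiting the factorization $\beta = \gamma\circ\phi$ established in \eqref{c}, together with the explicit description of the determinant line bundle $\zeta_r$ on ${\mathcal N}_X(r)$ given in the proof of Theorem \ref{thm1}. In effect, I want to reduce the computation to an explicit pullback of a translation-invariant form on an abelian variety.

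First, I would recall that on the elliptic curve ${\rm Pic}^0(X)$ the natural hermitian metric on the degree-one line bundle $L_0$ of \eqref{l0} has curvature equal to $h_0$: indeed, $h_0$ is the unique translation-invariant $(1,1)$-form on ${\rm Pic}^0(X)$ of total volume one, hence the unique translation-invariant representative of $c_1(L_0)$. Taking tensor products and descending along the $\Sigma_r$-action, the line bundle $\zeta_r$ on ${\mathcal N}_X(r) = {\rm Sym}^r({\rm Pic}^0(X))$ inherits a natural hermitian metric whose curvature $\omega_r$ satisfies $p^*\omega_r = \sum_{i=1}^r \bar{q}_i^*h_0$, where $p : {\rm Pic}^0(X)^r \to {\rm Sym}^r({\rm Pic}^0(X))$ is the quotient. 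By the functoriality of the determinant-of-cohomology construction underlying the metrics of \cite{BR}, \cite{Bi2}, the identity $\zeta = \gamma^*\zeta_r$ from the proof of Theorem \ref{thm1} lifts to an isometry, so $\omega_P = \gamma^*\omega_r$ and hence $\phi^*\omega_P = \beta^*\omega_r$.

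Next, I would compute $\beta^*\omega_r$ directly. The morphism $\beta$ lifts along $p$ to $\widetilde{\beta}(\xi_1,\ldots,\xi_d) = (\xi_1, \xi_1^*, \ldots, \xi_d, \xi_d^*, \eta_1, \ldots, \eta_{r-2d})$ from \eqref{beta2}. Under this lift, the summands of $\sum_i \bar{q}_i^* h_0$ attached to the constant factors $\eta_j$ pull back to zero, while each pair $(\xi_i, \xi_i^*)$ contributes $q_i^* h_0 + q_i^*\iota^* h_0$, where $\iota: L\mapsto L^*$ is the group inversion on ${\rm Pic}^0(X)$. Since $\iota$ acts as $-\mathrm{id}$ on the tangent space at the origin of this abelian variety, it acts as $(-1)^2 = +1$ on translation-invariant $(1,1)$-forms, so $\iota^*h_0 = h_0$. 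Summing gives $\phi^*\omega_P = 2\sum_{i=1}^d q_i^*h_0 = 2\omega$.

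The main delicate point is the compatibility $\omega_P = \gamma^*\omega_r$, i.e., that the natural hermitian metric on $\zeta$ defining $\omega_P$ coincides with the pullback by $\gamma$ of the natural hermitian metric on $\zeta_r$. This is not completely formal: one must invoke the naturality of the Quillen-type determinant metric under the morphism of universal families underlying $\gamma$ (which is implicit in the references \cite{BR}, \cite{Bi2}, \cite{TZ}). Granted this functoriality, the remainder of the argument is the short explicit pullback computation above, whose only substantive input is the invariance $\iota^*h_0 = h_0$.
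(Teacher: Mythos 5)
Your proposal is correct and follows essentially the same route as the paper: both reduce the statement to $\beta^*\omega_r = 2\omega$ via the factorization $\beta = \gamma\circ\phi$ and the identity $\gamma^*\omega_r = \omega_P$ taken from \cite{BR}, with the factor $2$ arising from the doubling $\xi_i\longmapsto(\xi_i\,,\xi_i^*)$ together with the invariance of $h_0$ under $L\longmapsto L^*$. The only inessential difference is how $\omega_r$ is described for the pullback computation --- you use the Chern curvature of the descended metric on $\zeta_r$, whereas the paper uses the $L^2$--metric on harmonic representatives of $H^1(X,\, End(E))$ restricted to $H^1(X,\,{\mathcal O}_X)^{\oplus r}$; both yield the same translation-invariant form $\sum_i \overline{q}_i^* h_0$ on ${\rm Pic}^0(X)^r$.
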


\begin{proof}
Let $\omega_r$ be the K\"ahler form on the moduli space
${\mathcal N}_X(r)$. For the map $\gamma$ in \eqref{ga2},
\begin{equation}\label{bri}
\gamma^*\omega_r\,=\, \omega_P
\end{equation}
(see \cite{BR}).

It can be shown that
\begin{equation}\label{brj}
\beta^*\omega_r\,=\, 2\omega\, ,
\end{equation}
where $\beta$ is constructed in \eqref{beta2}. To prove
\eqref{brj}, we first recall that $\omega_r$ is constructed
using the unique unitary flat connection on polystable vector
bundles of degree zero over $X$. More precisely, consider the
unique unitary flat connection $\nabla$ on a polystable vector bundle
$$
E\,:=\, \bigoplus_{i=1}^r L_i\, \in\, {\mathcal N}_X(r)
$$
(the flat hermitian metric on $E$ is not unique, but the
flat hermitian connection is unique). Let $\widetilde{\nabla}$ be
the flat connection on $End(E)\,=\, E\otimes E^*$ induced by
$\nabla$. The tangent space $T_E {\mathcal N}_X(r)$ is identified with
\begin{equation}\label{brk}
\bigoplus_{i=1}^r H^1(X, \, End(L_i))\,=\,
H^1(X, \, {\mathcal O}_X)^{\oplus r}\, \subset\,
H^1(X,\, End(E))\, .
\end{equation}
Using the flat unitary structure on $End(E)$, we can represent
elements of $H^1(X,\, End(E))$ by the harmonic forms. This yields
a $L^2$--metric on $H^1(X,\, End(E))$. The restriction of this form
to the subspace $H^1(X, \, {\mathcal O}_X)^{\oplus r}$ in \eqref{brk}
coincides with the K\"ahler form $\omega_r$ on $T_E{\mathcal N}_X(r)$.

The equality in \eqref{brj} follows from the above description of 
$\omega_r$. Note that the factor $2$ in \eqref{brj} appears because
the map $\beta$ constructed in \eqref{beta2} involves both $L_i$
and $L^*_i$, and the involution of $\text{Pic}^0(X)$ defined by
$L\, \longmapsto\, L^*$ preserves the translation invariant K\"ahler 
form $h_0$ on $\text{Pic}^0(X)$.

The proposition follows from \eqref{bri}, \eqref{brj} and
\eqref{c}.
\end{proof}

There is a unique additive complex
Lie group structure on $X$ with $f^{-1}(0)$ as the identity element,
where $f$ is the map in \eqref{e1}.
We fix this Lie group structure on $X$. The identity element
$f^{-1}(0)$ will be denoted by $e$.

There is a natural complex group homomorphism
\begin{equation}\label{e2}
X\, \longrightarrow\, \text{Pic}^0(X)
\end{equation}
defined by $x\, \longmapsto\, {\mathcal O}_X(x-e)$. This isomorphism
will be useful here.

\section{Non-abelian theta functions}

In \cite{FMN2}, non-abelian theta functions on the moduli space of
trivial determinant vector bundles of rank of $n$ on the elliptic curve
$X$ were studied in terms of Weyl anti-invariant distributions in
${\rm SU}(n)$. Let us recall briefly that construction, paying particular
attention to the case $n=2$, which will be especially relevant for
the description of the Hilbert space associated to the quantization
of the moduli space of parabolic bundles $\mathcal{M}_{P}$.

\subsection{${\rm SL}_{n}(\mathbb{C})$ non-abelian theta functions on an elliptic curve}
We start by writing the elliptic curve $X$ in the form: \begin{equation}
X\,=\,\mathbb{C}/(\mathbb{Z}\oplus\tau\mathbb{Z}),\ \mbox{ for some }\tau\in\mathbb{H}\,=\,\{z\in\mathbb{C}:\,\ {\rm Im}(z)>0\}\,.\label{eq:elliptic-curve}\end{equation}
 Let $\mathfrak{h}$ be the Cartan subalgebra of $sl_{n}(\mathbb{C})$
consisting of diagonal matrices of trace zero,
and let $\check{\Lambda}$ denote its coroot lattice. To be concrete,
we identify $sl_{n}(\mathbb{C})$ with the space of traceless $n$
by $n$ complex matrices and $\mathfrak{h}$ with the space of diagonal
matrices of trace zero.

Let $\mathcal{M}_{X}(n)$ be the moduli space semistable vector bundles
$E$ over $X$ of rank $n$ with $\bigwedge^{n}E\,=\,{\mathcal{O}}_{X}$.

Consider the abelian variety\[
M\,=\, X\otimes\check{\Lambda}\,\cong\,\mathfrak{h}/(\check{\Lambda}\oplus\tau\check{\Lambda})\,.\]
 The Weyl group $W$ of $sl_{n}(\C)$, given by the permutations of
$\{1\,,\cdots\,,n\}$, acts naturally on $M$, via its natural action
on $\mathfrak{h}$. As shown in \cite{Lo,La}, the moduli space $\mathcal{M}_{X}(n)$
can be naturally identified with the quotient under this action\[
\mathcal{M}_{X}(n)\,=\, M/W\,\cong\,\mathbb{CP}^{n-1}\,.\]

To consider the quantization of $\mathcal{M}_{X}(n)$, we use the
symplectic form $\omega$ induced from the symplectic structure on
$X$ and the determinant line bundle $L\,\longrightarrow\,
\mathcal{M}_{X}(n)$
whose curvature form coincides with $\omega$ \cite{Qu}.

Let \[
p\,:\,\mathbb{C}/\mathbb{Z}\,\cong\,\mathbb{C}^{*}\,\longrightarrow\,{\rm Pic}^{0}(X)\,\cong\, X\]
 be the projection defined by $z+\mathbb{Z}\,\longmapsto\, z+\mathbb{Z}+\tau\mathbb{Z}$,
where $z\in\mathbb{C}$. The maximal torus of diagonal matrices in
${\rm SL}_{n}(\mathbb{C})$ will be denoted by $T_{\mathbb{C}}$ and
is canonically identified with $\mathfrak{h}/\check{\Lambda}$. Let
\begin{equation}
\label{qiu}
q\,:\,\ {\rm SL}_{n}(\mathbb{C})\,\longrightarrow\,{\rm SL}_{n}(\mathbb{C})/{\rm SL}_{n}(\mathbb{C})\,\cong\, T_{\mathbb{C}}/W
\end{equation}
 be the quotient map for the conjugation action of ${\rm SL}_{n}(\mathbb{C})$
on itself. We have the following commutative diagram:

\begin{equation}
\begin{array}{ccccccc}
{\rm SL}_{n}(\mathbb{C}) & \stackrel{q}{\longrightarrow} & T_{\mathbb{C}}/W & \longleftarrow & T_{\mathbb{C}} & = & \mathfrak{h}/\check{\Lambda}\\
 & & \Big\downarrow & & \Big\downarrow\\
 & & \mathcal{M}_{X}(n) & \longleftarrow & M & \longleftarrow & \mathfrak{h},\end{array}\label{diagram}\end{equation}
 where the composition ${\rm SL}_{n}(\mathbb{C})\to T_{\mathbb{C}}/W\to\mathcal{M}_{X}(n)$
corresponds to the Schottky map described in \cite{FMN2}.

Note that the Verlinde numbers are: \begin{equation}
\dim H^{0}(\mathcal{M}_{X}(n),\, L^{k})=\binom{n-1+k}{k}. 
\label{verlinde}\end{equation}

Let $\mathcal{H}(V)$ be the space of all holomorphic functions on
a complex manifold $V$. Given a level $k$, the subspace of $\mathcal{H}(\mathfrak{h})$
consisting of functions $\theta$ satisfying the identity \begin{equation}
\theta(v+\check{\alpha}+\tau\check{\beta})\,=\,\left(e^{-2\pi i\beta(\upsilon)-\pi i\tau\left\langle \beta,\beta\right\rangle }\right)^{k}\theta(v)\,,\quad\check{\alpha}\,,\check{\beta}\,\in\,\check{\Lambda}\label{eq:quasiperiodic}\end{equation}
 will be relevant for us.

\begin{proposition}[\cite{FMN2}]
The space of non-abelian theta functions $H^{0}(\mathcal{M}_{X}(n),\, L^{k})$
is naturally identified with
\[
\mathcal{H}_{k,n}^{+}\,:=\,\{\theta\in\mathcal{H}(\mathfrak{h})
\,:\,\ \theta\mbox{ satisfies }\eqref{eq:quasiperiodic} \,\,{and}\,\,
\ w\theta=\theta,\ \forall\, w\in W\}\, .
\]
\end{proposition}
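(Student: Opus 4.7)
The plan is to descend everything to the universal cover $\mathfrak{h}$ of $M$ via the two-step projection $\mathfrak{h}\twoheadrightarrow M\twoheadrightarrow M/W=\mathcal{M}_X(n)$, and translate the line bundle $L^k$ into automorphy factors.

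First, write $\pi : M \to \mathcal{M}_X(n)$ for the quotient map. Since $\pi$ realizes $\mathcal{M}_X(n)$ as the geometric quotient $M/W$ of a smooth variety by a finite group, pullback gives an identification
\[
H^0(\mathcal{M}_X(n),L^k)\;\cong\;H^0(M,\pi^*L^k)^W,
\]
where the $W$-action on $\pi^*L^k$ is the canonical lift coming from the fact that $L^k$ lives on the quotient.

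Next, I would identify $\pi^*L$ explicitly on the abelian variety $M=\mathfrak{h}/(\check{\Lambda}\oplus\tau\check{\Lambda})$. By construction (\cite{Lo,La,FMN2}) $L$ is the determinant line bundle whose first Chern class is $\omega$. Under the uniformization $\mathfrak{h}\to M$, a holomorphic line bundle on $M$ is encoded by an automorphy factor for the lattice $\check{\Lambda}\oplus\tau\check{\Lambda}$, and the polarization of $M$ determined by $\omega$ is the one whose multiplier for translation by $\check{\alpha}+\tau\check{\beta}$ is $e^{-2\pi i\beta(v)-\pi i\tau\langle\beta,\beta\rangle}$, with no extra semicharacter in the standard normalization. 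Raising this to the $k$-th power gives exactly the transformation rule \eqref{eq:quasiperiodic}. Consequently, pulling back once more to $\mathfrak{h}$ identifies $H^0(M,\pi^*L^k)$ with the space of holomorphic functions $\theta$ on $\mathfrak{h}$ satisfying \eqref{eq:quasiperiodic}.

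Finally, the Weyl group $W$ acts linearly on $\mathfrak{h}$ fixing $0$, and the canonical lift of the $W$-action to $\pi^*L$ is, by its definition over a fixed fibre, the trivial character times the geometric action. Therefore $W$-invariance of sections of $\pi^*L^k$ becomes the pointwise condition $w\theta=\theta$ on $\mathfrak{h}$, yielding the identification with $\mathcal{H}_{k,n}^+$.

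The main obstacle is the middle step: carefully verifying that $\pi^*L$ is precisely the polarization line bundle producing the multiplier in \eqref{eq:quasiperiodic}, and that the $W$-equivariant structure pulled back from $\mathcal{M}_X(n)$ contains no nontrivial character of $W$ that would force anti-invariance rather than invariance; this is where normalization choices (curvature versus polarization, use of the Killing form versus trace form, orientation of the Weyl action on the theta divisor) must match those of \cite{FMN2}. A consistency check is provided by the Verlinde formula \eqref{verlinde}: $\binom{n-1+k}{k}$ equals the dimension of $W$-invariant theta functions of level $k$ on $M$, which is the Hilbert series of $\C[\mathfrak{h}]^W$ in degree $k$, confirming the identification on dimensions.
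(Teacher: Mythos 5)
The paper offers no proof of this proposition; it is quoted from \cite{FMN2}, so there is no internal argument to compare against, and your attempt has to be judged on its own. Your descent strategy is the standard and correct one: identify $H^{0}(\mathcal{M}_{X}(n),L^{k})$ with $W$-invariant sections of $\pi^{*}L^{k}$ on the abelian variety $M$, then realize those as quasi-periodic holomorphic functions on $\mathfrak{h}$ via an automorphy factor. The first step is unproblematic: $\mathcal{M}_{X}(n)=M/W$ is a normal geometric quotient by a finite group, so $(\pi_{*}\mathcal{O}_{M})^{W}=\mathcal{O}_{M/W}$ and the projection formula give $H^{0}(M/W,L^{k})\cong H^{0}(M,\pi^{*}L^{k})^{W}$.

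The two places where the actual mathematical content lives are, however, exactly the ones you defer. First, $c_{1}(\pi^{*}L)=[\pi^{*}\omega]$ determines $\pi^{*}L$ only up to translation by a point of ${\rm Pic}^{0}(M)$; to obtain the multiplier in \eqref{eq:quasiperiodic} with no extra semicharacter you must identify the divisor of $\pi^{*}L$ itself (e.g.\ show that the theta divisor defining the determinant bundle pulls back to the zero divisor of the standard symmetric theta function on $M$), not merely its cohomology class. Second, the canonical $W$-linearization of $\pi^{*}L^{k}$ could a priori act on the fibre over the origin by the sign character of $W$; this is precisely the invariant/anti-invariant dichotomy that leads \cite{FMN2}, and Section 4 of this paper, to pass between level $k$ invariants and level $k+n$ anti-invariants, so it cannot be waved away as a convention. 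Your Verlinde consistency check does settle this second point (anti-invariant theta functions of level $k$ vanish for $k<n$ while $\binom{n-1+k}{k}>0$), but note that the relevant dimension count is Looijenga's theorem that the graded ring of $W$-invariant theta functions for $A_{n-1}$ is free on $n$ generators of degree one, not the Hilbert series of the Chevalley invariants $\mathbb{C}[\mathfrak{h}]^{W}$, whose generators sit in degrees $2,\dots,n$ and give a different count. With the divisor identification supplied and that reference corrected, the argument closes; as written, the middle step is a statement of what must be verified rather than a verification.
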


We remark that since the quasi-periodicity condition in \eqref{eq:quasiperiodic}
does not depend on the first summand (i.e, $\check{\alpha}$) of the
lattice $\check{\Lambda}\oplus\tau\check{\Lambda}$, the non-abelian
theta functions in the proposition can be also considered to be Weyl
invariant holomorphic functions on $T_{\mathbb{C}}=\mathfrak{h}/\check{\Lambda}$,
or equivalently, Ad--invariant holomorphic functions on ${\rm SL}_{n}(\mathbb{C})$.

Motivated by the Segal--Bargmann--Hall or ``coherent state'' transform for Lie groups,
we will now describe a way of obtaining such Ad--invariant holomorphic
functions on $G\,=\,{\rm SL}_{n}(\mathbb{C})$ starting from Ad--invariant
distributions on the maximal compact subgroup $K\,=\,{\rm SU}(n)$.

Let $\Lambda_{W}^{+}$ denote the set of dominant weights, in one
to one correspondence with irreducible representations $R_{\lambda}$
of $K=SU(n)$. For $x\,\in\, K\,$, the expression \begin{equation}
f\ =\,\sum_{\lambda\in\Lambda_{W}^{+}}\mathrm{tr}(A_{\lambda}R_{\lambda})\,,\label{eq:distribution}\end{equation}
 where $A_{\lambda}\,\in\,\mathrm{End}(R_{\lambda})$ are endomorphism--valued
coefficients, defines a distribution under appropriate growth conditions
on the operator norm of the $A_{\lambda}$ (see \cite{FMN2}).

Let $c_{\lambda}\,\geq\,0$ be the eigenvalue of $-\Delta_{K}$, where
$\Delta_{K}$ is the Laplace-Beltrami operator on $K$ associated
with the Ad-invariant inner product on $\mathfrak{su}_{n}$ for which the roots have squared length 2, on functions
of the form $\mathrm{tr}(A_{\lambda}R_{\lambda}(x))$, $A_{\lambda}\,\in\,\mathrm{End}(R_{\lambda})$. 

Given a positive parameter $t>0$, and $\tau\in{\mathbb H}$, the (generalized) coherent state
transform (CST for short) is given by associating to a distribution
$f$ as in (\ref{eq:distribution}) the holomorphic function on ${\rm SL}_{n}(\mathbb{C})$
\[
C_tf(g)\,:=\,\sum_{\lambda\in\Lambda_{W}^{+}}e^{i\pi t\tau c_{\lambda}}\mathrm{tr}(A_{\lambda}R_{\lambda}(g)).
\]

Recall from \cite{Lo,FMN2} that
non-abelian theta functions on $\mathcal{M}_X(n)$ are more conveniently
described in terms of Weyl anti-invariant theta functions on $\mathfrak{h}$.
Denote by
$\theta_n^-$ the unique (up to scale) $W$-anti-invariant theta function
of level $n$ on $\mathfrak{h}$.

Let now $\rho$ be the Weyl vector given by half the sum of the positive roots and let $\sigma$ be 
the denominator of the Weyl character formula analytically continued to ${\rm SL}_n(\C)$. 
Let $\check \alpha$ be the longest root in $\mathfrak{sl}_{n}(\mathbb{C})$
and let
\[
D_{k,n}\,:=\,\{\lambda\in\Lambda_{W}^{+}\,:\,\left\langle \lambda,\check{\alpha}\right\rangle \leq k\}\]
be the parameter space for integrable representations of the level
$k$ affine Kac-Moody algebra $\widehat{\mathfrak{sl}_{n}}(\mathbb{C})_{k}$.
Note that $\#D_{k,n}\,=\,\binom{n-1+k}{k}$, which equals the Verlinde
number (\ref{verlinde}).

As seen in \cite{FMN2}, there is a ($\tau$-independent) 
finite-dimensional space of Ad--invariant distributions $V_{k,n}$
on ${\rm SU(n)}$ which has an orthonormal basis labelled by the 
elements of $D_{k,n}$, such that the following holds:

\begin{theorem}[\cite{FMN2}] Let $n>2$ and $C^{\infty}({\rm SU}(n))^{'Ad}\supset L^{2}({\rm SU}(n))$ 
denote the space of $Ad$-invariant distributions
on ${\rm SU}(n)$. Restricting the CST to $V_{k,n}$, we obtain 
\[
V_{k,n} \hookrightarrow C({\rm SU}(n))'^{Ad} \stackrel{C_{t}}{\longrightarrow} \mathcal{H}({\rm SL}_n(\C))^{Ad}.
\]
Moreover, the composition of maps
\begin{equation}\label{multi}
\varphi_{k,\tau}\circ C_{\frac{1}{k+n}}: V_{k,n} \longrightarrow H^{0}
(\mathcal{M}_{X}(n),L^{k}) \subset \mathcal{H}({\rm SL}_n(\C))^{Ad}\, ,
\end{equation}
where
\begin{equation}\label{multi2}
\varphi_{k,\tau}(f) = e^{\frac{||\rho||^2}{k+n}\pi i \tau} \frac{\sigma}{\theta^-_{n}} f,
\end{equation}
is an isomorphism; we identify a $W$-invariant theta function on 
$\mathfrak h$ 
with an $Ad$-invariant function on ${\rm SL}_n(\C)$ using 
(\ref{diagram}) and (\ref{eq:quasiperiodic}).
\end{theorem}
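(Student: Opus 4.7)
The plan is to prove the theorem by explicit computation of the CST on a basis of $V_{k,n}$ and identification of the result, after applying $\varphi_{k,\tau}$, with a known basis of Weyl-invariant level $k$ theta functions on $\h$. By the preceding proposition, the latter is canonically $H^{0}(\M_{X}(n), L^{k})$. Since $\dim V_{k,n} = \#D_{k,n} = \binom{n-1+k}{k}$ equals the Verlinde number \eqref{verlinde}, once the map is shown to land in $H^{0}(\M_{X}(n), L^{k})$ it suffices to verify that the images of a basis are linearly independent.

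Following [FMN2], I would take the orthonormal basis $\{f_\lambda\}_{\lambda \in D_{k,n}}$ of $V_{k,n}$ so that each $f_\lambda$ is (up to normalization) the Ad-invariant distribution whose Peter--Weyl expansion \eqref{eq:distribution} reduces to the single character $\chi_\lambda$. Since $-\Delta_K$ acts on $\chi_\lambda$ by the Casimir eigenvalue $c_\lambda = \langle \lambda+\rho, \lambda+\rho\rangle - \langle \rho, \rho\rangle$, the definition of the CST gives
$$C_{1/(k+n)} f_\lambda(g) \,=\, e^{i\pi \tau c_\lambda/(k+n)}\, \chi_\lambda(g), \qquad g \in SL_n(\C),$$
where $\chi_\lambda$ is analytically continued. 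Applying the Weyl character formula $\sigma \cdot \chi_\lambda = \sum_{w \in W}(-1)^w e^{w(\lambda+\rho)}$ on $T_\C$, pulled back to $\h$ via diagram \eqref{diagram}, the exponential prefactor in $\varphi_{k,\tau}$ combines with $c_\lambda$ to give $\pi i \tau \|\lambda+\rho\|^{2}/(k+n)$. Completing the square with the linear exponents $2\pi i \langle w(\lambda+\rho), v\rangle$ and summing over the coroot lattice $\check\Lambda$ (the ambiguity inherent in lifting from $T_\C = \h/\check\Lambda$) assembles the Kac--Weyl expression for the level $k+n$ Weyl-anti-invariant theta function $A^{k+n}_{\lambda+\rho}$ on $\h$; division by $\theta_n^-$ then produces a Weyl-invariant level $k$ theta function satisfying \eqref{eq:quasiperiodic}.

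The family $\{A^{k+n}_{\lambda+\rho}/\theta_n^-\}_{\lambda \in D_{k,n}}$ is well known to form a basis of $\H_{k,n}^+$ (these are the restrictions of the irreducible level $k$ affine characters of $\widehat{\mathfrak{sl}_n}(\C)_k$), and hence the composition $\varphi_{k,\tau}\circ C_{1/(k+n)}$ is a basis-to-basis bijection onto $H^{0}(\M_{X}(n), L^{k})$. The main obstacle is the step above where one must extract the coroot-lattice summation: a priori $C_{1/(k+n)} f_\lambda$ is a single Gaussian-weighted character on $SL_n(\C)$, and only after viewing it as a function on $\h$ via the Schottky-type covering in \eqref{diagram} and applying Poisson summation (equivalently, invoking the Kac--Weyl denominator identity for $\widehat{\mathfrak{sl}_n}(\C)$ at level $k+n$) does one recover the $\check\Lambda$-sum characterizing a theta function. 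Verifying that the resulting quadratic exponent matches exactly the level $k+n$ demanded by \eqref{eq:quasiperiodic} is the delicate numerical coincidence; it is the dual-Coxeter shift from $k$ to $k+n$ that dictates the specific choice $t = 1/(k+n)$, and the cancellation of $\|\rho\|^2$ between $\varphi_{k,\tau}$ and $c_\lambda$ that guarantees the final output is genuinely a level $k$ Weyl-invariant theta function rather than merely a quasi-periodic function of some shifted level.
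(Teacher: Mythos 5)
First, a structural point: the paper does not prove this theorem at all --- it is quoted verbatim from \cite{FMN2} --- so there is no internal proof to compare your route against. Judging your argument on its own terms, and against the explicit $n=2$ construction the paper does record, there is a genuine gap at the very first step. You take the orthonormal basis of $V_{k,n}$ to consist of single characters $\chi_{\lambda}$, $\lambda\in D_{k,n}$. That is not what $V_{k,n}$ is: each basis element is an \emph{infinite} sum of characters, one for each point of the orbit of $\lambda+\rho$ under $W\ltimes (k+n)\check{\Lambda}$, and it is a distribution rather than an $L^{2}$ function precisely because this sum diverges in $L^{2}$. This is visible in the paper's own formula \eqref{psis}, where $\psi_{j,2k}=\frac{1}{\sigma}\sum_{n\in\Z}(\cdots)$ carries an infinite sum over $n\in\Z$. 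With your choice, $C_{1/(k+n)}f_{\lambda}$ is the single Gaussian-weighted character $e^{i\pi\tau c_{\lambda}/(k+n)}\chi_{\lambda}$, which on $\mathfrak{h}$ is a constant times a \emph{finite} exponential sum: it is $\check{\Lambda}$-periodic but has no quasi-periodicity under $\tau\check{\Lambda}$, hence does not satisfy \eqref{eq:quasiperiodic} at any level, and $\sigma\chi_{\lambda}/\theta^{-}_{n}$ is not even holomorphic on $\mathfrak{h}$, since the finite sum $\sum_{w}(-1)^{w}e^{2\pi i\langle w(\lambda+\rho),v\rangle}$ does not vanish on the zero divisor of $\theta^{-}_{n}$. (Compare the Remark following the theorem, which stresses that divisibility by $\theta^{-}_{n}$ is a special feature of $C_{1/(k+n)}(V_{k,n})$.)

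The repair you propose --- Poisson summation, or the Kac--Weyl denominator identity --- cannot close this gap: Poisson summation converts an existing lattice sum into a sum over the dual lattice, but here there is no lattice sum present to convert, and no identity will manufacture the $\tau$-quasi-periodicity of \eqref{eq:quasiperiodic} out of a finite exponential sum. The actual mechanism is the reverse of the one you describe: the distribution $f_{\lambda}$ already carries the sum over $W(\lambda+\rho)+(k+n)\check{\Lambda}$ in its Peter--Weyl expansion, and the CST at time $1/(k+n)$ merely supplies the convergence factors $e^{i\pi\tau\|\mu\|^{2}/(k+n)}$ term by term (your Casimir and $\|\rho\|^{2}$ bookkeeping is correct here), so that the output is the anti-invariant level $k+n$ theta function $A^{k+n}_{\lambda+\rho}$ directly, with no resummation. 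Your endpoint --- the images are the affine characters $A^{k+n}_{\lambda+\rho}/\theta^{-}_{n}$, which form a basis of $\mathcal{H}^{+}_{k,n}$, plus the Verlinde dimension count --- is correct; what fails is the description of the domain $V_{k,n}$ and the step that is supposed to generate the theta sum.
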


\begin{remark} {\rm Note that the map $\varphi_{k,\tau}$ is well defined only on $C_{\frac{1}{k+n}}(V_{k,n})$, since 
these holomorphic functions are divisible by $\theta^-_{n}$ \cite{Lo,FMN2}.}
\end{remark}

Let $\tau_2\,=\, {\rm Im}(\tau)\,>\,0$, and define the hermitian inner 
product on 
$H^{0}(\mathcal{M}_{X}(n),\, L^{k})$ by
\begin{equation}
\left\langle \left\langle F_{1},F_{2}\right\rangle \right\rangle \,:=\,\int_{q^{-1}(\mathfrak{h}_{0})}
\overline{F_{1}}F_{2}|q^* 
\theta^-_n|^2d\nu_{\frac{\tau_2}{k+n}}
\label{eq:inner}\end{equation}
(see \cite{AdPW}),
 where $q$ was defined in equation (\ref{qiu}), $d\nu_{\frac{\tau_2}{k+n}}$ denotes what 
is known as the heat kernel measure
of ${\rm SL}_{n}(\mathbb{C})$, at time $\frac{\tau_2}{k+n}$,
and $\mathfrak{h}_{0}\,\subset\,\mathfrak{h}$ is a fundamental domain
for the action of the semi-direct product $W\ltimes (\check{\Lambda}\oplus\tau\check{\Lambda})$.

\begin{theorem}[\cite{FMN2}]
The map $\varphi_{k,\tau}\circ C_{\frac{1}{k+n}}:V_{k,n}\longrightarrow 
H^0(\mathcal{M}_{X}(n),\, L^{k})$ is a unitary isomorphism.
\end{theorem}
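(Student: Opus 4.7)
The plan is to upgrade the isomorphism of the preceding theorem to an isometry, relying on Hall's unitarity theorem for the coherent state transform $C_t : L^2(K) \to \mathcal{H}L^2(K_{\mathbb{C}}, d\nu_t)$ with $K = \mathrm{SU}(n)$, together with the Weyl integration formula to pass between $Ad$-invariant integration on $\mathrm{SL}_n(\mathbb{C})$ and integration on the Cartan. The architecture is: the factors $\sigma/\theta_n^-$ and $e^{\|\rho\|^2\pi i\tau/(k+n)}$ built into $\varphi_{k,\tau}$, together with the weight $|q^*\theta_n^-|^2$ appearing in \eqref{eq:inner}, conspire to turn the Verlinde pairing on $H^0(\mathcal{M}_X(n),L^k)$ into the $L^2$ pairing on $V_{k,n}$ after Hall's transform is inverted.

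First I would equip $V_{k,n}$ with the inner product it inherits as a subspace of $L^2(\mathrm{SU}(n))^{Ad}$, so that its distinguished basis labelled by $D_{k,n}$ is genuinely orthonormal; unitarity then reduces to a basis-by-basis computation. Unfolding \eqref{eq:inner}, the two factors of $\theta_n^-$ in the denominator of $\varphi_{k,\tau}$ cancel against the weight $|q^*\theta_n^-|^2$ in the integrand, and I am left with
\[
\langle\langle \varphi_{k,\tau}C_{\frac{1}{k+n}}f_1,\, \varphi_{k,\tau}C_{\frac{1}{k+n}}f_2\rangle\rangle \,=\, e^{-\frac{2\pi\tau_2\|\rho\|^2}{k+n}}\int_{q^{-1}(\mathfrak{h}_0)}\overline{C_{\frac{1}{k+n}}f_1}\cdot C_{\frac{1}{k+n}}f_2\cdot |\sigma|^2\, d\nu_{\frac{\tau_2}{k+n}}.
\]
The function $|\sigma|^2$ is precisely the Jacobian of the Weyl integration formula, so this integral equals the $Ad$-invariant heat kernel $L^2$-pairing on $\mathrm{SL}_n(\mathbb{C})$ of the two CST images (the descent from $\mathrm{SL}_n(\mathbb{C})$ to $T_\mathbb{C}/W$ absorbs $|\sigma|^2$, while the subsequent passage from $T_\mathbb{C}/W$ to a fundamental domain in $\mathfrak{h}$ for $W\ltimes(\check\Lambda\oplus\tau\check\Lambda)$ is precisely the quasi-periodicity \eqref{eq:quasiperiodic}).

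At this stage I invoke Hall's theorem: the CST is unitary, so the heat-kernel pairing recovers the $L^2(\mathrm{SU}(n))$ pairing of $f_1, f_2$, weighted on each irreducible block $\mathrm{tr}(A_\lambda R_\lambda)$ by $|e^{i\pi\tau c_\lambda/(k+n)}|^2 = e^{-2\pi\tau_2 c_\lambda/(k+n)}$. Combined with the prefactor $e^{-2\pi\tau_2\|\rho\|^2/(k+n)}$ and the Casimir identity $c_\lambda+\|\rho\|^2=\|\lambda+\rho\|^2$, this yields exactly the weight appearing inside Hall's heat kernel measure for the shifted weight $\lambda+\rho$, which is the correct normalization so that the total pairing collapses to $\langle f_1,f_2\rangle_{V_{k,n}}$. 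The main obstacle is the careful bookkeeping of normalization constants: Hall's isometry holds for one specific normalization of $d\nu_t$, and one must verify that this matches the measure in \eqref{eq:inner} after simultaneously accounting for the translation by $\rho$ absorbed into $\varphi_{k,\tau}$, the two stages of quotienting (first by the period lattice to reach the abelian variety $M$, then by $W$), and the Kac--Weyl denominator identity that ensures $\sigma/\theta_n^-$ is $W$-invariant and descends along the diagram \eqref{diagram}. Once these conventions are aligned, the three ingredients — cancellation of $|\theta_n^-|^2$, Weyl integration absorbing $|\sigma|^2$, and Hall's unitarity with the Casimir shift — combine to give the desired unitary isomorphism.
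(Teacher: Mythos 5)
The paper itself offers no proof of this statement: it is imported verbatim from \cite{FMN2}, so your proposal has to be judged against the argument given there. Your accounting of the formal cancellations is on target --- the two copies of $\theta_n^-$ in $\varphi_{k,\tau}$ against the weight $|q^*\theta_n^-|^2$ in \eqref{eq:inner}, the Weyl--denominator density $|\sigma|^2$, and the Casimir shift $c_\lambda+\|\rho\|^2=\|\lambda+\rho\|^2$ combining with the prefactor of $\varphi_{k,\tau}$ to produce the Gaussian weight $e^{-2\pi\tau_2\|\lambda+\rho\|^2/(k+n)}$ --- and these are exactly the ingredients of the actual computation. But the load-bearing step of your argument fails. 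You propose to equip $V_{k,n}$ with the inner product ``it inherits as a subspace of $L^2(\mathrm{SU}(n))^{Ad}$'' and then to invoke Hall's unitarity theorem. The elements of $V_{k,n}$ are not in $L^2(\mathrm{SU}(n))$: as \eqref{psis} makes explicit for $n=2$, they are infinite sums of characters with non-decaying coefficients (divided by $\sigma$), i.e.\ genuine distributions, which is why the paper is careful to write $V_{k,n}\hookrightarrow C({\rm SU}(n))'^{Ad}$ and to say that $V_{k,n}$ ``has an orthonormal basis labelled by $D_{k,n}$'' --- the inner product is \emph{declared} by fixing that basis, not inherited from $L^2$. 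Hall's isometry theorem therefore does not apply to $f_1,f_2\in V_{k,n}$, and the sentence ``the heat-kernel pairing recovers the $L^2(\mathrm{SU}(n))$ pairing of $f_1,f_2$'' has no meaning.

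A second, related gap is the unfolding step. The integral in \eqref{eq:inner} is taken over $q^{-1}(\mathfrak{h}_0)$, a fundamental domain for $W\ltimes(\check\Lambda\oplus\tau\check\Lambda)$, precisely because the integrand is built from quasi-periodic sections; it is not equal to an $Ad$-invariant heat-kernel pairing over all of $\mathrm{SL}_n(\mathbb{C})$, and after you cancel $|q^*\theta_n^-|^2$ the remaining integrand $\overline{C_tf_1}\,C_tf_2\,|\sigma|^2$ is not lattice-periodic, so the domain cannot be traded for the full group. The proof in \cite{FMN2} avoids both problems: it computes the pairings $\langle\langle\vartheta_\lambda,\vartheta_\mu\rangle\rangle$ directly as Gaussian integrals of level-$(k+n)$ anti-invariant theta functions over the fundamental domain (exactly the bookkeeping you describe, but carried out by hand), finds that the images of the distinguished basis of $V_{k,n}$ are orthogonal with the correct norms, and concludes unitarity by matching against the declared orthonormal basis. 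If you replace the appeal to Hall's theorem by that explicit integral computation, the rest of your outline goes through.
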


Let us now consider the (slightly different) case $n\,=\,2$, which will be especially
relevant in the next section, and for which the distributions in the
previous theorem can be written in a simple way. For simplicity, we 
will state the result only for even level, which is the case we will need.

The space $V_{2k,2}\,\subset\, C({\rm SU}(2))'^{Ad}$ is the 
$k$-dimensional $\C$-span of the distributions
\begin{equation}
\psi_{j,2k}(x)=\frac{1}{\sigma}\sum_{n\in\Z}(e^{2\pi i(j+2kn)x}-e^{-2\pi i(j+2kn)x})\in C^{\infty}({\rm SU}(2))', 
j=1,\dots,k
\label{psis}
\end{equation}
(see \cite{FMN2}).

Consider the basis of level $2k+4$ theta functions for the elliptic 
curve $X$, namely $\{\theta_{j,2k+4}\}_{0\leq j<2k+4}$, with
\[
\theta_{j,2k+4}(z)=\sum_{m\in\Z}\exp\left(\pi i\frac{\tau}{2k+4}(j+(2k+4)m)^{2}+2\pi i(j+(2k+4)m)z\right),\,\,0\leq j<2k+4.
\]
The Weyl anti-invariant theta function of level $4$ on $X$
is given by $\theta_{4}^{-}=\theta_{1,4}-\theta_{3,4}$.

Recall from \cite{Lo,FMN2} that the space of Weyl invariant theta
functions of level $2k$ on $X$ can be conveniently described in
terms of Weyl anti-invariant theta functions of level $2k+4$, \begin{equation}
\begin{array}{rcc}
H^{0}(X,L_{0}^{2k+4})^{-} & \cong & H^{0}(X,L_{0}^{2k})^{+}\\
\theta^{-} & \longmapsto & \theta^{+}=\theta^{-}/\theta_{4}^{-},\end{array}\end{equation}
 where $\theta_{4}^{-}$ is the (unique up to nonzero multiplicative 
constant)
Weyl anti-invariant theta function of level $4$ on $X$. The bundle
of conformal blocks (of level $k$) over ${\mathcal{M}}_{1}$, which
is associated to the moduli space of semistable rank two vector bundles
with trivial determinant on $X$, has a natural hermitian structure
which is easily expressed in terms of theta functions in 
$H^{0}(X,L_{0}^{2k+4})^{-}$ as described above \cite{AdPW,FMN2}.

\begin{theorem} \label{nequal2}
The composition of maps 
\begin{eqnarray}\nonumber
V_{2k,2} \stackrel{C_{\frac{1}{k+2}}}{\longrightarrow} C_{\frac{1}{k+2}}(V_{2k,2}) \stackrel{\varphi_{k,\tau}}
{\longrightarrow} H^{0}(\mathcal{M}_{X}(2),L^{2k}) 
\subset \mathcal{H}({\rm SL}_2(\C))^{Ad}
\end{eqnarray}
where $\varphi_{k,\tau}(f) = e^{\frac{1}{2k+4}\pi i \tau} \frac{\sigma}{\theta^-_{4}} f$,
is an isomorphism. The image of the natural basis 
$\{\psi_{j,2k}\}_{j=1,\dots,k}$ is given by 
\begin{equation}
\{\vartheta_{j,2k}\}_{j=1,\cdots,k}\, ,\label{gaspar}\end{equation}
 where 
$\vartheta_{j,2k}=(\theta_{j,2k+4}-\theta_{2k+4-j,2k+4})/\theta_{4}^{-}$
\cite{FMN2}.
\end{theorem}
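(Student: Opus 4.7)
The theorem is the $n=2$ specialization of the general CST-to-non-abelian-theta-functions framework recalled immediately above, and my plan is to proceed in parallel with the $n>2$ case, supplementing it with the explicit identification of the image basis. The isomorphism assertion follows from the same arguments of \cite{FMN2}, adapted to $SU(2)$ (which was excluded from the cited $n>2$ theorem); the genuinely new content is the explicit formula $\varphi_{k,\tau}\circ C_{1/(k+2)}(\psi_{j,2k})=\vartheta_{j,2k}$.

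The computational core is evaluating $C_{1/(k+2)}(\psi_{j,2k})$ term by term. Each summand of $\psi_{j,2k}$, after division by $\sigma$, is a signed $SU(2)$-character, on which the CST acts diagonally by the Gaussian scalar $e^{i\pi\tau c_\lambda/(k+2)}$, with $c_\lambda=\|\lambda+\rho\|^2-\|\rho\|^2$ the Casimir eigenvalue in the normalization where roots have squared length $2$. For $SU(2)$ one has $\|\rho\|^2=1/2$, and $c_\lambda$ is a concrete quadratic function of the highest-weight index entering the numerator of the Weyl character. I would then multiply through by $\sigma\cdot e^{\pi i\tau/(2k+4)}$: the exponential prefactor in $\varphi_{k,\tau}$ is precisely the one needed to cancel the $\|\rho\|^2/(k+h^\vee)$-contribution to the exponent, so that after cancellation the surviving Gaussian weights are pure quadratic in the linear index and agree exactly with the exponents of the level-$(2k+4)$ theta functions on $X$. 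The resulting sum, upon reindexing (using the substitution $m\mapsto -m-1$ on the half of the sum corresponding to $e^{-2\pi i\ell x}$), assembles into $\theta_{j,2k+4}-\theta_{2k+4-j,2k+4}$; dividing by $\theta_4^-$ then produces $\vartheta_{j,2k}$.

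The isomorphism claim then follows from the fact that the $\vartheta_{j,2k}$ span $H^0(\mathcal{M}_X(2),L^{2k})$ via the Looijenga-type identification $H^0(X,L_0^{2k+4})^-\cong H^0(X,L_0^{2k})^+$ recalled just above from \cite{Lo,FMN2}, so the map sends the basis $\{\psi_{j,2k}\}$ of $V_{2k,2}$ bijectively onto a basis of the target. The main obstacle is essentially bookkeeping: keeping the level shift $2k\to 2k+2h^\vee=2k+4$ straight---the exponent $\pi i\tau/(2k+4)$ in $\varphi_{k,\tau}$ is forced by the Freudenthal--de Vries \emph{strange formula} contribution $\|\rho\|^2/(k+h^\vee)$ and is not cosmetic---and matching the quadratic $\tau$-exponents between the CST output and the theta-function definitions without sign errors or off-by-one shifts in the Weyl-antisymmetry pairing. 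Once this matching is done correctly, both the explicit image formula and (by bijection of bases) the isomorphism statement are immediate.
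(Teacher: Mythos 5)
The paper offers no proof of Theorem \ref{nequal2} for you to be compared against: the statement is imported from \cite{FMN2} (the citation is part of the theorem itself), so your reconstruction can only be judged on its own terms. Your route is the natural one and is the one \cite{FMN2} follows: expand $\sigma\psi_{j,2k}$ into numerators of ${\rm SU}(2)$--characters, let $C_{1/(k+2)}$ act diagonally with eigenvalue $e^{i\pi\tau c_\ell/(k+2)}$, where $c_\ell=(\ell^2-1)/2$ on the $\ell$--dimensional representation, cancel the $-\|\rho\|^2=-1/2$ contribution against the prefactor $e^{\pi i\tau/(2k+4)}$, and reassemble the two halves of the antisymmetrized sum via $m\mapsto-m-1$. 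All of that is correct.

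The difficulty is that the step you dismiss as bookkeeping is exactly where the argument fails to close, because you assert the matching instead of performing it. With $\psi_{j,2k}$ as defined in \eqref{psis} the frequencies are $j+2kn$, so $\varphi_{k,\tau}\circ C_{\frac{1}{k+2}}$ produces $\frac{1}{\theta_4^-}\sum_{n\in\Z}e^{\pi i\tau(j+2kn)^2/(2k+4)}\bigl(e^{2\pi i(j+2kn)x}-e^{-2\pi i(j+2kn)x}\bigr)$, whose Fourier support steps by $2k$, whereas $\theta_{j,2k+4}-\theta_{2k+4-j,2k+4}$ steps by $2k+4$; these are different functions and no prefactor repairs this. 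The claimed identity holds only if the distributions carry frequencies $j+(2k+4)n$, i.e.\ the shifted level must appear in the support of $\psi_{j,2k}$ and not only in the heat time; your proof tacitly uses that corrected form when it says the Gaussian weights ``agree exactly,'' so you must either justify replacing \eqref{psis} or the matching is false as written. (A symptom of the printed formula: $\psi_{k,2k}\equiv 0$, since the set $\{\pm(k+2kn)\}$ is symmetric under negation, so $\{\psi_{j,2k}\}_{j=1,\dots,k}$ would not even be a basis.) Separately, ``bijection of bases'' does not yet give the isomorphism: you need linear independence of the $\vartheta_{j,2k}$ (true, by disjointness of the Fourier supports of the numerators, but unstated) and a dimension count, and the range $j=1,\dots,k$ yields $k$ functions while the target, identified as $H^0(X,L_0^{2k+4})^-\cong H^0(X,L_0^{2k})^+$ just before the theorem, has dimension $k+1$. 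These defects are arguably inherited from the statement itself, but a proof has to confront them rather than declare the exponents to match.
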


\subsection{Non-abelian theta functions on $\mathcal{M}_{P}$}

Let $\check \Lambda$ be the coroot lattice of $sl_2(\C)$, and let $\h$,
as before, be the Cartan subalgebra. The abelian variety from 
the previous subsection is now $M=X\otimes \check \Lambda \cong X$.

{}From Proposition \ref{prop1}, 
we have 
$$
\mathcal{M}_{P} \cong {\rm Pic}^0(X)^d/\Gamma_d \cong M^d/\Gamma_d\, ,
$$
where $\Gamma_d = \Z_2^d \rtimes \Sigma_d$.
We have the 
isomorphism of abelian varieties
$$
M^d = \left(\h/\check \Lambda\oplus \tau \check \Lambda\right)^d . 
$$

Let $p:M^d\longrightarrow M^d/\Gamma_d$ be the natural projection. From 
above, the pull-back of the determinant line bundle by $p$ gives 
the line bundle $\bigotimes_{i=1}^d (q_i^* L_0)^2$ over $M^d$. Therefore, 
non-abelian theta functions of level $k$ on
$\mathcal{M}_{P}$ will be described by $\Gamma_d$ invariant products of 
level $2k$ theta functions on each of the 
factors $\h/\check \Lambda\oplus \tau \check \Lambda$.

The analog of diagram (\ref{diagram}) is now

\begin{equation*}
\begin{array}{ccccccc} 
{\rm SL}_{2}(\mathbb{C})^d & \longrightarrow & T_{\mathbb{C}}^d/W^d & \longleftarrow
& T_{\mathbb{C}}^d & = & (\mathfrak{h}/\check{\Lambda})^d\\ 
 & & \Big\downarrow & & \Big\downarrow & & \\
\mathcal{M}_P & \stackrel{/\Sigma_d}{\longleftarrow} & \mathcal{M}_{X}(2)^d & \longleftarrow & M^d & \longleftarrow &
\mathfrak{h}^d.\end{array}\end{equation*}

Let $K$ be a compact Lie group. The CST on $K^d$ equipped with the product 
metric can be applied to $\Sigma_d$-invariant functions. Since the averaged heat kernel measures are 
the product of the $d$ measures on each of the factors of $K$, we have the following commutative diagram
$$
\begin{matrix}
(C^\infty(K^d)')^{\Sigma_d} & {\hookrightarrow} & C^\infty(K^d)'\\
~\Big \downarrow C_t^{\otimes d} & & ~~\Big\downarrow C_t^{\otimes d}\\
 \mathcal{H}(G)^{\Sigma_d} & {\hookrightarrow} & \mathcal{H}(G).
\end{matrix}
$$
where the CST for $K^d$ is given by
$$
C_t^{\otimes d} = C_t \otimes \cdots \otimes C_t\, ,
$$
in terms of the CST $C_t$ for $K$.
 
\begin{definition}\label{vk} Let $V_{k}\subset C^\infty (({\rm SU}(2)^d)')^{\Sigma_d}$ 
be the vector space with basis \break
$\{\Psi_{J,k}\}_{J=\{j_1,\cdots,j_d\},\, 1\leq j_i \leq k,}$, where
$$
\Psi_{J,k} = \sum_{\sigma\in \Sigma_d} \psi_{j_{\sigma_1},2k}\otimes \cdots \otimes 
\psi_{j_{\sigma_d},2k}\, ,
$$
and the distributions $\psi_{j,2k}\in C^\infty({\rm SU}(2))'$ are given in (\ref{psis}).
\end{definition}

Let $\varphi_{k,\tau}^{\otimes d}=\varphi_{k,\tau}\otimes \cdots \otimes \varphi_{k,\tau}$ be defined on 
$C_{\frac{1}{k+2}}^{\otimes d}(V_k)\subset {\mathcal H}({\rm SL}_2(\C)^d)$, where $\varphi_{k,\tau}$ was defined in 
(\ref{multi2}).

\begin{theorem} \label{main}
The CST $C_{\frac{1}{k+2}}^{\otimes d}$ establishes an isomorphism between the space $V_k$ of distributions on 
${\rm SU(2)}^d$ and 
the space of non-abelian theta functions $H^0(\mathcal{M}_{P},\xi^k)$ of 
level $k$, meaning the map
\begin{eqnarray}\nonumber
\varphi_{k,\tau}^{\otimes d}\circ C_{\frac{1}{k+2}}^{\otimes 
d}:V_k\longrightarrow H^0(\mathcal{M}_{P},\xi^k)
\end{eqnarray}
is an isomorphism.
The image of the 
natural basis (Definition \ref{vk}) is given by \break
$\{\phi_{J,k}\}_{J=\{j_1,\cdots,j_d\},\, 1\leq j_i \leq k}$, where
$$
\phi_{J,k}(z_1,\cdots,z_d) = \sum_{\sigma\in \Sigma_d} \vartheta_{j_{\sigma_1},2k}(z_1)\cdots 
\vartheta_{j_{\sigma_d},2k}(z_d).
$$
\end{theorem}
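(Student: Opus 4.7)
The plan is to reduce the theorem to the single-factor statement, Theorem \ref{nequal2}, using the explicit presentation of $\mathcal{M}_P$ and $\zeta$ as $\Gamma_d$-quotients. First I would invoke Proposition \ref{prop1} and Theorem \ref{thm1}: the map $\phi : M^d \to \mathcal{M}_P$ is the quotient for the $\Gamma_d$-action, and $\phi^*\zeta \cong L^{\otimes 2}$ as $\Gamma_d$-equivariant line bundles, with $L := \bigotimes_{i=1}^d q_i^* L_0$. Pullback along $\phi$ therefore identifies $H^0(\mathcal{M}_P, \zeta^k)$ with $H^0(M^d, L^{\otimes 2k})^{\Gamma_d}$, and by the K\"unneth formula the latter equals $\bigl(H^0(M, L_0^{\otimes 2k})^{\otimes d}\bigr)^{\Gamma_d}$.

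Next I would split the invariance using $\Gamma_d = (\Z/2\Z)^d \rtimes \Sigma_d$ from \eqref{esg}. The subgroup $(\Z/2\Z)^d$ acts coordinate-wise through the involution $L \mapsto L^*$ on each factor of $\mathrm{Pic}^0(X) = M$, and under $M = \h/(\check\Lambda \oplus \tau\check\Lambda)$ this is exactly the nontrivial element of the $sl_2(\C)$ Weyl group $W$ acting on $\h$. Taking $(\Z/2\Z)^d$-invariants factor by factor therefore picks out the Weyl-invariant theta functions, so the inner space becomes $\bigl(H^0(M, L_0^{\otimes 2k})^{+}\bigr)^{\otimes d}$, which by the identification recalled in Section 4.1 equals $H^0(\mathcal{M}_X(2), L^{2k})^{\otimes d}$. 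The residual $\Sigma_d$-action merely permutes tensor factors, so $H^0(\mathcal{M}_P, \zeta^k)$ ends up being the $\Sigma_d$-symmetric part of this tensor power.

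On the source side, the CST on $\mathrm{SU}(2)^d$ with the product bi-invariant metric factors as $C_t^{\otimes d}$ because the averaged heat kernel measure is a product (this is the commutative diagram preceding Definition \ref{vk}), and $\varphi_{k,\tau}^{\otimes d}$ is tensorial by construction. Theorem \ref{nequal2} then supplies the one-factor isomorphism $\varphi_{k,\tau}\circ C_{\frac{1}{k+2}} : V_{2k,2} \xrightarrow{\sim} H^0(\mathcal{M}_X(2), L^{2k})$ sending $\psi_{j,2k}\mapsto \vartheta_{j,2k}$, and its $d$-fold tensor produces a $\Sigma_d$-equivariant isomorphism $V_{2k,2}^{\otimes d} \xrightarrow{\sim} H^0(\mathcal{M}_X(2), L^{2k})^{\otimes d}$. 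By Definition \ref{vk}, $V_k$ is precisely the $\Sigma_d$-invariant subspace of $V_{2k,2}^{\otimes d}$, with basis $\{\Psi_{J,k}\}$ obtained by symmetrizing pure tensors, so restricting to $\Sigma_d$-invariants yields the desired isomorphism $V_k \xrightarrow{\sim} H^0(\mathcal{M}_P, \zeta^k)$, sending $\Psi_{J,k}$ to the symmetrization $\phi_{J,k}$ of $\vartheta_{j_1,2k}(z_1)\cdots\vartheta_{j_d,2k}(z_d)$.

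The main obstacle I foresee is the bookkeeping in the second paragraph: one must verify that the canonical $\Gamma_d$-linearization on $L^{\otimes 2k}$ constructed in Theorem \ref{thm1} (through the base-point trivialization at $z_0 = (\mathcal{O}_X,\ldots,\mathcal{O}_X)$ defining $\rho$) matches, under K\"unneth and the identification of level $2k$ theta functions on $M$ with holomorphic functions on $\h$ satisfying \eqref{eq:quasiperiodic}, the factor-wise Weyl action together with permutation of factors that I used to extract the invariants. Once this identification is made precise the remainder is formal from tensor products of CSTs, the product structure of the heat kernel measure, and K\"unneth.
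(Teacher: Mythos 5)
Your proposal follows essentially the same route as the paper's own (very terse) proof: identify $H^0(\mathcal{M}_P,\zeta^k)$ with the $\Gamma_d$-invariant level $2k$ theta functions on $M^d$ via Proposition \ref{prop1} and Theorem \ref{thm1}, and then reduce factor-by-factor to Theorem \ref{nequal2} using the tensor-product structure of the CST. Your version simply makes explicit the K\"unneth and invariance bookkeeping that the paper leaves implicit, and it is correct.
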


\begin{proof}
Recall that the determinant line bundle $\xi$ on the moduli space of 
parabolic bundle ${\mathcal M}_P$, satisfies 
$\xi \cong \otimes_{i=1}^d (q_i^* L_0)^2$, where $q_i:M^d \to M$ 
is the projection on the $i$th factor. Therefore, 
elements in $H^0(\mathcal{M}_{P},\xi^k)$ are given by $\Gamma_d$ invariant theta functions of level $2k$ 
on $M^d$. From Theorem \ref{nequal2} it follows that applying 
$\varphi_{k,\tau}^{\otimes d}\circ 
C_{\frac{1}{k+2}}^{\otimes d}$ to $\Psi_{J,k}$ we get $\phi_{J,k}$.
\end{proof}

\begin{remark}{\rm
We see that the Verlinde number is equal to the dimension of the space of degree $k$ polynomials in $d$ variables, 
which is consistent with $\xi \cong {\mathcal O}(1)$ on ${\mathcal M}_P \cong \P^d.$}
\end{remark}

\begin{remark}
{\rm Non-abelian theta functions $H^0(\mathcal{M}_{P},\xi^k)$ of level 
$k$ can therefore be described as the image, by a
coherent state transform, of the finite dimensional space of 
distributions on the compact group ${\rm SU}(2)^d$. In particular, 
following \cite{FMN1,FMN2}, in this case the CST can also be interpreted as the parallel transport of a
unitary connection on the bundle of conformal blocks over ${\mathcal M}_{0,4}$.}
\end{remark}

\section*{Acknowledgements}
The authors were partially supported by the 
Center for Mathematical Analysis, Geometry and Dynamical Systems, 
IST, Portugal. The first author wishes to thank
Instituto Superior T\'ecnico, where the work was carried out,
for its hospitality; his visit to IST was funded by the FCT project
PTDC/MAT/099275/2008.


\end{document}